\documentclass[12pt,a4paper,reqno]{amsart}
\pdfoutput=1
\usepackage[hidelinks,pdfstartview=FitH]{hyperref} 
\usepackage{amssymb}
\usepackage{tikz}
\usepackage[all,cmtip]{xy}
\usepackage[USenglish]{babel}
\usepackage[top=3cm,right=2.5cm,left=2.5cm,bottom=3cm,marginparsep=15pt,marginparwidth=2cm,footskip=20pt]{geometry}

\linespread{1.3}
\allowdisplaybreaks[4]

\newtheorem{prop}{Proposition}[section]
\newtheorem{lemma}[prop]{Lemma}

\newtheorem{thm}[prop]{Theorem}

\newtheorem{definition}[prop]{Definition}
\newtheorem{ex}[prop]{Example}
\newtheorem*{thm*}{Theorem}

\numberwithin{equation}{section}

\renewcommand{\emptyset}{\varnothing}
\newcommand{\id}{\mathrm{id}}

\title[Non-surjective pullbacks of graph C*-algebras]{\vspace*{-25mm}
Non-surjective pullbacks of graph C*-algebras\\ from non-injective pushouts of graphs}

\author[A.~Chirvasitu]{Alexandru Chirvasitu} 
\address[A.~Chirvasitu]{SUNY, Buffalo, USA.}
\email{achirvas@buffalo.edu}

\author[P. M.~Hajac]{Piotr M.~Hajac}
\address[P. M.~Hajac]{Instytut Matematyczny, Polska Akademia Nauk, ul. \'Sniadeckich 8, Warszawa, 00-656 Poland;
and
Department of Mathematics,
University of Colorado Boulder,
2300 Colorado Avenue,
Boulder, CO 80309-0395,
USA}
\email{pmh@impan.pl }

\author[M.~Tobolski]{Mariusz Tobolski}
\address[M.~Tobolski]{Instytut Matematyczny, Polska Akademia Nauk, ul. \'Sniadeckich 8, Warszawa, 00-656 Poland}
\email{mtobolski@impan.pl }


\begin{document}
\baselineskip14pt
\parskip5mm
\begin{abstract}
We find a substantial class of pairs of 
$*$-homomorphisms between graph C*-algebras of the form
$\xymatrix@1@C=1pc{C^*(E)\,\ar@{^{(}->}[r] &C^*(G)&\ar@{->>}[l]C^*(F)}$
whose pullback C*-algebra is an AF graph C*-algebra.
Our result can be interpreted as a recipe for determining the quantum space obtained by shrinking a quantum subspace.
There is a variety of examples from noncommutative
topology, such as
quantum complex  projective spaces (including the standard Podle\'s quantum sphere) or
quantum teardrops, that instantiate the result. Furthermore, to go beyond AF graph C*-algebras,
we consider extensions of graphs over sinks
and prove an analogous theorem for the thus obtained graph C*-algebras.
\end{abstract}
\maketitle

\section{Introduction}
\noindent
The classical two-sphere $S^2$ can be obtained by shrinking the boundary of the disc $B^2$ to a~point.
In other words, there is a pushout diagram in the category of topological spaces
\begin{equation}\label{sphere}
\begin{gathered}
\xymatrix{
& S^2 \\
\{\ast\} \ar[ru]
& 
& B^2~.\ar[lu]\\
& S^1 \ar[lu] \ar[ru]
}
\end{gathered}
\end{equation}
Due to the contravariant duality of algebras and spaces, the diagram~\eqref{sphere} amounts to
an isomorphism $C(S^2)\cong C(B^2)\oplus_{C(S^1)}\mathbb{C}$ of C*-algebras of complex-valued continuous functions on the 
two-sphere and the pushout $B^2\sqcup_{S^1}\{\ast\}$ respectively. 

At the same time, the Toeplitz algebra $\mathcal{T}$~\cite{la-c67}
can be viewed as a noncommutative deformation of $C(B^2)$ (see~\cite[Theorem~IV.7]{kl-92}).
Therefore, the C*-algebra $C(S^2_{q0})$ of the standard Podle\'s sphere~\cite[(3a)]{pod-87} 
provides a noncommutative deformation of the
diagram~\eqref{sphere}, namely we have the following pullback diagram in the category of \mbox{C*-alge}\-bras
\begin{equation}\label{podpull}
\begin{gathered}
\xymatrix{
& C(S^2_{q0}) \ar[ld] \ar[rd] \\
\mathbb{C}  \ar[rd]
& 
& \mathcal{T}~. \ar[ld]\\
& C(S^1)
}
\end{gathered}
\end{equation}
\vspace{-5mm}

The aim of this paper is to generalize the above pullback construction
using the concept of a C*-algebra $C^*(E)$ of a directed graph $E$ (e.g.,~see~\cite{bhrsz-02}).
Graph C*-algebras provide powerful tools in noncommutative topology,
and many \mbox{C*-al}ge\-bras representing noncommutative deformations of topological spaces 
are isomorphic with C*-algebras of graphs~\cite{bsz-18,hsz-02,hsz-08}. 
These isomorphisms are usually quite complicated and they do not depend on the
deformation parameter. Nevertheless, when such an isomorphism is established, it is easier to obtain solutions to
many problems, especially concerning K-theory.

Our starting point is that all the C*-algebras in the diagram~\eqref{podpull} can be viewed as C*-algebras of graphs.
We present this pictorially as follows:
\begin{equation}\label{pictpull}
\begin{gathered}
\xymatrix{
&
C^*\left(\begin{tikzpicture}[auto,swap]
\tikzstyle{vertex}=[circle,fill=black,minimum size=3pt,inner sep=0pt]
\tikzstyle{edge}=[draw,->]
\tikzstyle{cycle1}=[draw,->,out=130, in=50, loop, distance=40pt]
\tikzstyle{cycle2}=[draw,->,out=135, in=45, loop, distance=65pt]
\node[vertex] (0) at (0,0) {};
\node[vertex] (2) at (1.5,0) {};
\path (0) edge[edge,above] node {$(\infty)$} (2);
\end{tikzpicture}\right)~~~
\ar[rd] \ar[ld]&\\
C^*\left(~
\begin{tikzpicture}[auto,swap]
\centering
\tikzstyle{vertex}=[circle,fill=black,minimum size=3pt,inner sep=0pt]
\tikzstyle{edge}=[draw,->]
\tikzstyle{cycle1}=[draw,->,out=130, in=50, loop, distance=40pt]
\tikzstyle{cycle2}=[draw,->,out=135, in=45, loop, distance=65pt]
\node[vertex] (0) at (0,0) {};
\end{tikzpicture}~\right) \ar[rd]
& & 
C^*\left(\begin{tikzpicture}[auto,swap]
\tikzstyle{vertex}=[circle,fill=black,minimum size=3pt,inner sep=0pt]
\tikzstyle{edge}=[draw,->]
\tikzstyle{cycle1}=[draw,->,out=130, in=50, loop, distance=20pt]
\tikzstyle{cycle2}=[draw,->,out=130, in=50, loop, distance=70pt] 
\node[vertex] (0) at (0,0) {};
\node[vertex] (1) at (0.5,0) {};
\path (0) edge[cycle1] node {} (0);
\path (0) edge[edge] node {} (1);
\end{tikzpicture}\right)~.
\ar[ld]
\\
&
C^*\left(\begin{tikzpicture}[auto,swap]
\tikzstyle{vertex}=[circle,fill=black,minimum size=3pt,inner sep=0pt]
\tikzstyle{edge}=[draw,->]
\tikzstyle{cycle1}=[draw,->,out=130, in=50, loop, distance=20pt]
\tikzstyle{cycle2}=[draw,->,out=130, in=50, loop, distance=70pt]
\node[vertex] (0) at (0,0) {};
\path (0) edge[cycle1] node {} (0);
\end{tikzpicture}\right)
&
}
\end{gathered}
\end{equation}
(See the examples in Section~\ref{prem} for details.)

The graph-algebraic decomposition \eqref{pictpull}
manifests a certain general phenomenon
that can be explained in terms of non-injective pushouts of graphs.
The goal of this paper is to explore this phenomenon to arrive at a general setting. To this end, we search for a new concept of 
morphisms of graphs, so as to ensure that, in the thus defined category of graphs, the assignment of graph algebras to graphs 
becomes a contravariant functor translating pushouts of graphs into pullbacks of graphs algebras. While this task seems to be 
completed in \cite{hrt} (cf.~\cite[Corollary~3.4]{kpsw16}) for injective pushouts of row-finite graphs (each vertex emits only finitely 
many edges), herein we handle a non-injective case without row-finiteness assumption.

To accommodate this naturally occuring non-injectivity, we replace the standard idea of mapping vertices to vertices and edges to 
edges by the more flexible idea of mapping finite paths to finite paths. We arrive at a general result for a class of unital AF graph 
\mbox{C*-alge}\-bras including the standard Podle\'s sphere, complex quantum projective 
spaces~\cite[Definition on p.~109]{vs90}, and quantum teardrops~\cite{bf-12}. 
Finally, we go beyond AF graph \mbox{C*-alge}\-bras 
by extending their acyclic graphs over sinks. 

\section{Graph-algebraic preliminaries}\label{prem}
\noindent
A {\em directed graph} $E$ is a quadruple $(E^0,E^1,s,r)$, where $E^0$ is the set of vertices, $E^1$ is  the set of edges (arrows),
and  $s,r:E^1\to E^0$ are the source map and the range (target) map respectively.
Throughout the paper, we consider only directed graphs with countable sets of vertices and edges, and we will often 
simply refer to them as graphs.

\begin{definition}[Graph C*-algebra]
The \emph{graph \mbox{C*-algebra}} $C^*(E)$ of a directed graph $E$ is the universal \mbox{C*-algebra} generated by mutually 
orthogonal projections $\big\{P_v\;|\;v\in E^0\big\}$ and partial isometries $\big\{S_e\;|\;e\in E^1\big\}$ 
satisfying the following conditions:
\begin{align}
S_e^*S_f &=\delta_{e,f}P_{r(e)} && \text{for all }e,f\in E^1\,,\tag{\text{GA1}} \label{eq:GA1} \\
\sum_{e\in s^{-1}(v)}\!\! S_eS_e^*&=P_v && 
\text{for all }v\in E^0\text{ such that }0<|s^{-1}(v)|<\infty\,,\tag{GA2} \label{eq:GA2}\\
S_eS_e^*&\leq P_{s(e)} && \text{for all }e\in E^1.\tag{GA3}\label{eq:GA3}
\end{align}
\end{definition}

A vertex $v$ in $E$ is called a {\em sink} if and only if $s^{-1}(v)=\emptyset$. A vertex is called \emph{regular} iff
it is not a sink and it emits finitely many edges. A graph is called \emph{row finite} iff all its vertices are either regular or sinks.
By a finite \emph{path} in $E$ we mean a sequence $(e_1,\ldots,e_n)$ of edges satisfying $r(e_i)=s(e_{i+1})$ for all $i\in\{1,\ldots,n-1\}$.
The length of a path is the number of edges in the sequence. We consider vertices as paths of length zero, and
denote the set of finite paths by ${\rm Path}(E)$.
The notation $S_{\alpha}$, along with the source and the range map, naturally extend to any~$\alpha\in{\rm Path}(E)$. As we consider only
finite paths throughout this paper, we will simply refer to them as paths.

A~path $\alpha$ is called a {\em loop} if and only if $s(\alpha)=r(\alpha)$ and $\alpha$ is not a vertex. We say that a loop is
\emph{short} iff it is an edge.
\begin{definition}
We call a path {\em pointed} iff its final edge is not a loop.
\vspace*{-5mm}\end{definition}\noindent
We say that a path $\alpha$ is a {\em prolongation} of a path $\beta$ if and only if
$\alpha=\beta \gamma$ for a path $\gamma\in{\rm Path}(E)$ such that $r(\beta)=s(\gamma)$.
We write $\beta\preceq\alpha$ when $\alpha$ is a prolongation of $\beta$.
Observe that $\preceq$ gives a partial order on ${\rm Path}(E)$.  

\begin{lemma}\label{lem:2.3}
Let $\alpha$ and $\beta$ be finite paths in an arbitrary graph $E$. Then
\begin{equation*}
S^*_\alpha S_\beta\neq 0\quad\iff\quad \left(\alpha\preceq\beta\quad\text{or}\quad\beta\preceq\alpha\right).
\end{equation*}
\end{lemma}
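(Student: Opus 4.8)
The plan is to reduce the statement to a purely combinatorial fact about the defining relations (GA1)--(GA3) by repeatedly applying them to simplify the product $S^*_\alpha S_\beta$. The backward implication is the easy direction: if, say, $\beta\preceq\alpha$, write $\alpha=\beta\gamma$, so that $S^*_\alpha S_\beta=S^*_\gamma S^*_\beta S_\beta$. Using (GA1) in the extended form $S^*_\beta S_\beta=P_{r(\beta)}$ (which one derives from (GA1) by an easy induction on the length of $\beta$, noting that for $\beta$ a single vertex the statement is the identity $P_v=P_v$), we get $S^*_\alpha S_\beta=S^*_\gamma P_{r(\beta)}=S^*_\gamma P_{s(\gamma)}$. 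Since $S_\gamma S^*_\gamma\le P_{s(\gamma)}$ by (GA3), one checks that $S^*_\gamma P_{s(\gamma)}=S^*_\gamma$, and this is nonzero because $S^*_\gamma S_\gamma=P_{r(\gamma)}\ne 0$ (all vertex projections in the universal C*-algebra are nonzero, as witnessed by a representation). The case $\alpha\preceq\beta$ is symmetric, taking adjoints.

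For the forward implication I would argue by induction on $\min(|\alpha|,|\beta|)$, showing the contrapositive: if neither $\alpha\preceq\beta$ nor $\beta\preceq\alpha$, then $S^*_\alpha S_\beta=0$. If either path has length zero, say $\alpha=v\in E^0$, then $S^*_\alpha S_\beta=P_v S_\beta$; now $\beta\preceq\alpha$ would force $\beta$ to be a vertex too, and $\alpha\preceq\beta$ means $\beta$ starts at $v$, so the negation forces $s(\beta)\ne v$, whence $P_v S_\beta=P_v P_{s(\beta)}S_\beta=0$ by orthogonality of the vertex projections (using $S_\beta=P_{s(\beta)}S_\beta$, again a consequence of (GA3)). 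For the inductive step, write $\alpha=e\alpha'$ and $\beta=f\beta'$ with $e,f\in E^1$. Then $S^*_\alpha S_\beta=S^*_{\alpha'}(S^*_e S_f)S_{\beta'}$. By (GA1), $S^*_e S_f=\delta_{e,f}P_{r(e)}$: if $e\ne f$ the whole product is zero and we are done; if $e=f$ then $S^*_\alpha S_\beta=S^*_{\alpha'}P_{r(e)}S_{\beta'}=S^*_{\alpha'}S_{\beta'}$ (absorbing $P_{r(e)}=P_{s(\alpha')}=P_{s(\beta')}$ as before). But $\alpha=e\alpha'$, $\beta=e\beta'$ and the incomparability of $\alpha,\beta$ is equivalent to the incomparability of $\alpha',\beta'$, so the inductive hypothesis applied to the shorter pair $\alpha',\beta'$ gives $S^*_{\alpha'}S_{\beta'}=0$.

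The main obstacle is bookkeeping rather than conceptual depth: one must be careful that all the auxiliary identities used — namely $S^*_\beta S_\beta=P_{r(\beta)}$, $S_\beta=P_{s(\beta)}S_\beta$, $P_v S_\beta = \delta_{v,s(\beta)} S_\beta$, and $P_{r(e)}S_{\beta'}=S_{\beta'}$ when $s(\beta')=r(e)$ — are genuinely consequences of (GA1) and (GA3) and do not secretly require (GA2) (they do not, since they involve only single relations or monotone use of $S_eS_e^*\le P_{s(e)}$). I would isolate these as a short preliminary remark. The only other subtlety is ensuring nonvanishing of $S^*_\gamma$ in the backward direction, which, as noted, follows from $S^*_\gamma S_\gamma=P_{r(\gamma)}$ together with the fact that the universal graph C*-algebra admits a representation in which every $P_v$ is a nonzero projection (e.g.\ the canonical one on $\ell^2$ of the path space, or simply invoking that $C^*(E)\ne 0$ with all $P_v\ne 0$, which is standard).
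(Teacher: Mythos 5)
Your proof is correct and follows essentially the same route as the paper: the backward direction uses the same decomposition $\alpha=\beta\gamma$ to reduce $S^*_\alpha S_\beta$ to $S^*_\gamma$, and the forward direction cancels matching initial edges via (GA1) (the paper does this in one pass, you phrase it as an induction on $\min(|\alpha|,|\beta|)$, which is the same argument). The only cosmetic difference is how nonvanishing of $S^*_\gamma$ is sourced: the paper cites the linear-basis result for $C^*(E)$, whereas you derive it from $S^*_\gamma S_\gamma=P_{r(\gamma)}\neq 0$ and the standard fact that vertex projections are nonzero; both are fine.
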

\begin{proof}
Assume that $\beta\preceq\alpha$, i.e.\ that $\alpha=\beta\gamma$ with $r(\beta)=s(\gamma)$. Then,
as $S^*_\gamma$ is an element of a linear basis of $C^*(E)$ for any $\gamma\in {\rm Path}(E)$~\cite[Corollary~1.5.12]{aasm-17},
we obtain
\begin{equation}
S^*_\alpha S_\beta
=S^*_\gamma S^*_\beta S_\beta=S^*_\gamma\neq 0.
\end{equation}
Much in the same way, we see that $S^*_\alpha S_\beta\neq0$ when $\alpha\preceq\beta$.

Conversely, assume that $S^*_\alpha S_\beta\neq 0$ for some finite paths 
\begin{equation}
\alpha:=x_1\ldots x_m\,,\qquad \beta:=y_1\ldots y_r\,,\qquad x_1,\ldots, x_m,y_1,\ldots, y_r\in E^1\,.
\end{equation}
If $m\geq r$, then
\begin{equation}
0\neq S^*_\alpha S_\beta=S^*_{x_m}\ldots S^*_{x_1}S_{y_1}\ldots S_{y_r}
\end{equation}
and \eqref{eq:GA1} imply that $x_i=y_i$ for $i=1,\ldots,r$. This means that $\beta\preceq\alpha$.
Otherwise, when $r\geq m$, we get that $\alpha\preceq\beta$.
\end{proof} 

Next, to make the  condition \eqref{eq:GA3} easier to check, we prove the following lemma:
\begin{lemma}\label{specon}
  Let $E$ be an arbitrary graph and $\alpha$ a path in $E$ with its origin at $v\in E^0$. Then
  \begin{equation*}
    S_\alpha S_\alpha^*\le P_v\in C^*(E). 
  \end{equation*}  
\end{lemma}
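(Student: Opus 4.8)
The plan is to argue by induction on the length of $\alpha$, stripping off one edge at a time. If $\alpha$ has length zero then $\alpha=v$, so $S_\alpha=P_v$ and the asserted inequality is just $P_v\le P_v$. Otherwise write $\alpha=e_1e_2\cdots e_n$ with $n\ge 1$, $s(e_1)=v$, and $r(e_i)=s(e_{i+1})$ for $1\le i<n$; for $1\le k\le n$ set $\alpha_k:=e_ke_{k+1}\cdots e_n$, so that $\alpha_1=\alpha$, $\alpha_n=e_n$, $S_{\alpha_k}=S_{e_k}S_{\alpha_{k+1}}$, and $s(\alpha_k)=s(e_k)$.

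I would prove, by downward induction on $k$, the statement $S_{\alpha_k}S_{\alpha_k}^*\le P_{s(e_k)}$. The base case $k=n$ is exactly relation \eqref{eq:GA3} for the edge $e_n$. For the inductive step, assume $S_{\alpha_{k+1}}S_{\alpha_{k+1}}^*\le P_{s(e_{k+1})}$ and note that $s(e_{k+1})=r(e_k)$. Conjugating this inequality by $S_{e_k}$ — a positive, hence order-preserving, operation on the self-adjoint part of $C^*(E)$ — gives
\[
S_{\alpha_k}S_{\alpha_k}^*=S_{e_k}\bigl(S_{\alpha_{k+1}}S_{\alpha_{k+1}}^*\bigr)S_{e_k}^*\le S_{e_k}P_{r(e_k)}S_{e_k}^*.
\]
Now I use \eqref{eq:GA1} with $e=f=e_k$, i.e.\ $S_{e_k}^*S_{e_k}=P_{r(e_k)}$, together with the partial-isometry identity $S_{e_k}S_{e_k}^*S_{e_k}=S_{e_k}$, to compute $S_{e_k}P_{r(e_k)}S_{e_k}^*=S_{e_k}S_{e_k}^*S_{e_k}S_{e_k}^*=S_{e_k}S_{e_k}^*$, which by \eqref{eq:GA3} is $\le P_{s(e_k)}$. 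Chaining the two inequalities completes the induction; taking $k=1$ yields $S_\alpha S_\alpha^*\le P_{s(e_1)}=P_v$.

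I do not expect a genuine obstacle here. The two points deserving a word of care are: (i) the collapse $S_{e_k}P_{r(e_k)}S_{e_k}^*=S_{e_k}S_{e_k}^*$, which is precisely where \eqref{eq:GA1} and the assumption that the $S_e$ are partial isometries enter; and (ii) that $x\mapsto S_{e_k}xS_{e_k}^*$ preserves the order of self-adjoint elements of a C*-algebra, which is standard (if $b-a=c^*c$ then $S_{e_k}(b-a)S_{e_k}^*=(cS_{e_k}^*)^*(cS_{e_k}^*)\ge 0$). Everything else is bookkeeping with the path notation $S_{\alpha_k}=S_{e_k}S_{\alpha_{k+1}}$.
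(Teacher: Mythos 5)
Your proof is correct and uses the same ingredients as the paper's: induction on the length of $\alpha$, relation \eqref{eq:GA3} for a single edge, the collapse $S_e P_{r(e)}S_e^*=S_eS_e^*$ via \eqref{eq:GA1}, and the fact that conjugation by $S_e$ preserves the order on self-adjoint elements. The only cosmetic difference is that you peel edges off the front of the path (reducing to the tail $\alpha_{k+1}$), whereas the paper writes $\alpha=\beta e$ and peels off the last edge, reducing to the initial subpath $\beta$ which still starts at $v$; both are mirror images of the same one-line induction.
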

\begin{proof}
  Write $\alpha=\beta e$, where  $e$ is  an edge with its origin at $w\in E^0$ and $\beta$ is an initial subpath  of $\alpha$ ending at~$w$. 
Then
  \begin{equation}
    S_\alpha S_\alpha^* = S_\beta (S_eS_e^*)S_\beta^*\le S_\beta P_w S_\beta^* = S_\beta S_\beta^*\,,
  \end{equation}
  where the middle inequality is due to~\eqref{eq:GA3}. Now, the claim follows by the induction on the length of~$\alpha$.
\end{proof}

To get ready for examples in the last section, we present graph-algebraic presentations of some well-known \mbox{C*-al}ge\-bras.
\begin{ex}
 \emph{The algebra $\mathbb{C}$ of complex numbers is isomorphic with the graph C*-algebra
of the graph with one vertex and no edges.
\begin{equation*}
\begin{tikzpicture}[auto,swap]
\tikzstyle{vertex}=[circle,fill=black,minimum size=3pt,inner sep=0pt]
\tikzstyle{edge}=[draw,->]
\tikzstyle{cycle1}=[draw,->,out=130, in=50, loop, distance=40pt]
\tikzstyle{cycle2}=[draw,->,out=135, in=45, loop, distance=65pt]
\node[vertex,label=right:$v$] (0) at (0,0) {};
\end{tikzpicture}
\end{equation*}}
\end{ex}
\begin{ex}
 \emph{The C*-algebra $C(S^1)$ of all continuous complex-valued functions on the circle is the
universal unital C*-algebra generated by a single unitary~$u$. It 
is isomorphic with the graph C*-algebra of the graph 
given below
through the isomorphism given by~\mbox{$u\mapsto S_e$}.
\begin{equation*}
\begin{tikzpicture}[auto,swap]
\tikzstyle{vertex}=[circle,fill=black,minimum size=3pt,inner sep=0pt]
\tikzstyle{edge}=[draw,->]
\tikzstyle{cycle1}=[draw,->,out=130, in=50, loop, distance=40pt]
\tikzstyle{cycle2}=[draw,->,out=135, in=45, loop, distance=65pt]
\node[vertex,label=right:$v$] (0) at (0,0) {};
\path (0) edge[cycle1,above] node {$e$} (0);
\end{tikzpicture}
\end{equation*}}
\end{ex}
\begin{ex} \emph{The Toeplitz algebra $\mathcal{T}$~\cite{la-c67} is
the universal unital C*-algebra generated by a single isometry~$s$. It
is isomorphic with the graph C*-algebra of the graph 
given below
through the isomorphism given by $s\mapsto S_{t_1}+S_{t_2}$.
\begin{equation*}\label{toe}
\begin{tikzpicture}[auto,swap]
\tikzstyle{vertex}=[circle,fill=black,minimum size=3pt,inner sep=0pt]
\tikzstyle{edge}=[draw,->]
\tikzstyle{cycle1}=[draw,->,out=130, in=50, loop, distance=40pt]
\tikzstyle{cycle2}=[draw,->,out=135, in=45, loop, distance=65pt]
\node[vertex,label=left:$w_1$] (0) at (0,0) {};
\node[vertex,label=right:$w_2$] (1) at (1,0) {};
\path (0) edge[cycle1, above] node {$t_1$} (0);
\path (0) edge[edge, below] node {$t_2$} (1);
\end{tikzpicture}
\end{equation*}}
\end{ex}
\begin{ex}\emph{The Cuntz algebra $\mathcal{O}_m$ \cite{cuntz} is
the universal unital C*-algebra generated by isometries $s_1$,~$\ldots$, $s_m$ subject to the relation
$\sum_{i=1}^ms_is^*_i=1$. It is isomorphic with the graph \mbox{C*-algebra} of the graph $R_m$ given below
through the isomorphism given by $s_i\mapsto S_{e_i}$.
\begin{equation}\label{cuntz}
\begin{tikzpicture}[auto,swap]
\tikzstyle{vertex}=[circle,fill=black,minimum size=3pt,inner sep=0pt]
\tikzstyle{edge}=[draw,->]
\tikzstyle{cycle1}=[draw,->,out=130, in=50, loop, distance=40pt]
\tikzstyle{cycle2}=[draw,->,out=130, in=50, loop, distance=70pt]
\node[vertex,label=right:$1$] (0) at (0,0) {};
\node (1) at (0,1.25) {\vdots};
\path (0) edge[cycle1] node {$e_1$} (0);
\path (0) edge[cycle2] node[above] {$e_m$} (0);
\end{tikzpicture}
\end{equation}}
\end{ex}
\begin{ex}\emph{
Let $q\in [0,1)$.  
The \mbox{C*-alge}\-bra $C(S^2_{q0})$~\cite[(3a)]{pod-87} of the standard Podle\'s quantum sphere  
coincides with the C*-algebra of the Vaksman--Soibelman quantum complex projective line 
$C(\mathbb{C}{\rm P}^1_q)$ \cite[p.~109]{vs90},
which has a graph-algebraic presentation as the graph C*-algebra of the graph given 
below (see \cite[Section~2.3]{hsz-02}):
\begin{equation*}\label{podgraf}
\begin{tikzpicture}[auto,swap]
\tikzstyle{vertex}=[circle,fill=black,minimum size=3pt,inner sep=0pt]
\tikzstyle{edge}=[draw,->]
\tikzstyle{cycle1}=[draw,->,out=130, in=50, loop, distance=40pt]
\tikzstyle{cycle2}=[draw,->,out=135, in=45, loop, distance=65pt]
\node[vertex,label=left:$v_1$] (0) at (0,0) {};
\node[vertex,label=right:$v_2$] (2) at (2,0) {};
\path (0) edge[edge, above] node {$(\infty)$} (2);
\end{tikzpicture}
\end{equation*}
Here the arrow decorated by $(\infty)$ denotes  countably infinitely many arrows.}
\end{ex}

We end this section by recalling some standard results that we will use throughout the paper.
Let $E$ be a directed graph.
A subset $H\subseteq E^0$ is called {\em hereditary} iff, for any $v\in H$ such that there 
is a path starting at $v$ and ending at $w\in E^0$, we have $w\in H$. 
If $H$ is hereditary, then the ideal $I_H$ generated by the projections associated with the elements of $H$ is of the form 
(cf.\ the equation (1) in~\cite{bhrsz-02}):
\begin{equation}\label{hereditary}
I_H=\overline{\rm span}\{S_\alpha S^*_\beta~|~\alpha,\beta\in{\rm Path}(E),r(\alpha)=r(\beta)\in H\}.
\end{equation}
Here $\overline{\rm span}$ denotes the closed linear span.

Assume additionally that there are no vertices that emit infinitely many arrows into $H$ 
and finitely many (but not zero) arrows
outside of $H$. Assume also that $H$ is {\em saturated}, i.e.\ that there does not
exist a regular vertex $v\notin H$ such that
$r(s^{-1}(v))\subseteq H$.
Then, the quotient algebra $C^*(E)/I_H$ is again a graph C*-algebra (cf.\ the discussion below the equation (1) in \cite{bhrsz-02}):
\begin{equation}\label{quotient}
C^*(E)/I_H\cong C^*(E/H),\quad\text{where}\quad E/H:=(E^0\setminus H,r^{-1}(E^0\setminus H),s_H,r_H)
\end{equation}
and $s_H$ and $r_H$ are the restrictions-corestrictions of $s$ and $r$ respectively.

\section{Non-surjective pullbacks of graph C*-algebras}
\noindent
In this section we prove a non-surjective pullback theorem generalizing the diagram~\eqref{podpull}.
First, we need some preliminaries on graphs and their morphisms.

Let $D=(D^0,D^1,s_D,r_D)$ and $E=(E^0,E^1,s_E,r_E)$ be directed graphs. A~morphism of graphs $f:D\to E$
is a pair of mappings $f^0:D^0\to E^0$ and $f^1:D^1\to E^1$ satisfying
\begin{equation}\label{graphmor}
f^0\circ s_D=s_E\circ f^1,\qquad f^0\circ r_D=r_E\circ f^1.
\end{equation}
If there is an injective morphism of graphs $D\to E$, we say that $D$ is a {\em subgraph} of $E$ and write $D\subseteq E$.
\begin{definition}\label{adm}
An injective graph morphism $\iota:D\to E$ is called an {\em admissible inclusion} iff the following conditions are satisfied:
\vspace*{-5mm}\begin{enumerate}
\item[(A1)] $E^0\setminus \iota^0(D^0)$ is hereditary and saturated,
\item[(A2)] $\iota^1(D^1)=r_E^{-1}(\iota^0(D^0))$,
\item[(A3)] no vertex in $E^0$ emits infinitely many edges into 
$E^0\setminus D^0$ while emitting finitely many (but not zero) edges into $D^0$.
\end{enumerate}
\end{definition}

Next, let us state the following elementary fact (cf.~\eqref{quotient} and the discussion preceding~it).
\begin{prop}\label{admquot}
Let $D\subseteq E$ be an admissible inclusion. 
Then, we have an isomorphism of graph C*-algebras
\begin{equation}\label{3.2}
C^*(D)\cong C^*(E/(E^0\setminus D^0)).
\end{equation}
\end{prop}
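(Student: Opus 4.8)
The plan is to derive the isomorphism \eqref{3.2} as an instance of the quotient-by-hereditary-saturated-ideal description \eqref{quotient}, applied to the subset $H:=E^0\setminus D^0\subseteq E^0$ (where I suppress the identification $\iota$ and simply regard $D$ as a subgraph of $E$). First I would check that the hypotheses needed to invoke \eqref{quotient} are exactly the defining conditions of an admissible inclusion: condition (A1) is precisely the statement that $H$ is hereditary and saturated, and condition (A3) is precisely the statement that no vertex emits infinitely many edges into $H$ while emitting finitely many (but nonzero) edges outside $H$. Hence \eqref{quotient} applies verbatim and gives $C^*(E)/I_H\cong C^*(E/H)$, where $E/H=(E^0\setminus H,\,r_E^{-1}(E^0\setminus H),\,s_H,\,r_H)=(D^0,\,r_E^{-1}(D^0),\,s_H,\,r_H)$.

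Next I would identify the graph $E/H$ with $D$ itself. On vertices this is immediate: $(E/H)^0=E^0\setminus H=D^0$. On edges, $(E/H)^1=r_E^{-1}(D^0)$, and here condition (A2) is exactly what is needed: it says $\iota^1(D^1)=r_E^{-1}(\iota^0(D^0))$, i.e.\ (after suppressing $\iota$) $D^1=r_E^{-1}(D^0)=(E/H)^1$. The source and range maps of $E/H$ are by definition the restrictions-corestrictions of $s_E,r_E$; the corestriction of $r_E$ to $D^0$ makes sense precisely because every edge in $r_E^{-1}(D^0)$ has range in $D^0$, and the corestriction of $s_E$ makes sense because $D^0$ is \emph{not} assumed hereditary going backwards — but an edge $e$ with $r_E(e)\in D^0$ also has $s_E(e)\in D^0$ is \emph{not} automatic; instead one uses that $D$ is a genuine subgraph so that for $e\in D^1=r_E^{-1}(D^0)$ one has $s_E(e)=s_D(e)\in D^0$. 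Thus $s_H=s_D$ and $r_H=r_D$, and $E/H=D$ as directed graphs, whence $C^*(E/H)=C^*(D)$.

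Finally, composing the two identifications yields
\begin{equation*}
C^*(D)=C^*(E/H)\cong C^*(E)/I_H=C^*\bigl(E/(E^0\setminus D^0)\bigr),
\end{equation*}
which is the desired statement \eqref{3.2}. I would remark that although the displayed isomorphism in the proposition is phrased as $C^*(D)\cong C^*(E/(E^0\setminus D^0))$ — so that literally it is just the trivial identity $E/(E^0\setminus D^0)=D$ composed with nothing — the content that makes it nontrivial is the verification that the set-theoretic data of $E/H$ coincides with that of $D$, which is where (A2) and the subgraph hypothesis are used, together with the fact that \eqref{quotient} is applicable at all, which is where (A1) and (A3) are used. (The isomorphism $C^*(E)/I_H\cong C^*(E/H)$ itself is quoted as a standard fact from \cite{bhrsz-02}, so I would not reprove it.)

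The main obstacle, such as it is, is bookkeeping rather than mathematics: one must be careful that the admissibility conditions are stated in terms of the image $\iota^0(D^0),\iota^1(D^1)$ and one is implicitly using that $\iota$ is an isomorphism onto its image as graphs, so that the induced maps $s_H,r_H$ on $E/H$ pull back under $\iota$ to $s_D,r_D$. Once that identification is made explicit, there is nothing further to prove. If one instead wanted $C^*(D)$ and $C^*(E/(E^0\setminus D^0))$ to be \emph{literally equal} rather than canonically isomorphic, one would simply observe that $E/(E^0\setminus D^0)$ and $D$ have the same vertex set, the same edge set, and the same source/range maps, by the computation above.
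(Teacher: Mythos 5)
Your proposal is correct and follows exactly the route the paper intends: the paper states this proposition without proof, merely citing the quotient description \eqref{quotient}, and your argument simply spells out that (A1) and (A3) are the hypotheses needed to invoke \eqref{quotient} while (A2) identifies $E/(E^0\setminus D^0)$ with $D$. The bookkeeping you carry out (including the observation that $s_E(e)\in D^0$ for $e\in r_E^{-1}(D^0)$, which also follows directly from heredity of $E^0\setminus D^0$) is exactly the content the authors left implicit.
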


To phrase our main result, it is convenient to view graphs as small categories whose objects are vertices and morphisms are finite paths.
Then functors between such categories are  what we want as morphisms between graphs. Using the thus understood functors as morphisms, we 
 generalize the Cuntz--Krieger graph category \cite[p.~172]{g-kr09} (cf.~\cite[Definition~1.6.2]{aasm-17})
by allowing egdes to be mapped to finite paths intead of only edges.
\begin{lemma}\label{functor}
Let $f\colon F\to E$ be a functor between graphs such that:
\vspace*{-5mm}
\begin{enumerate}
\item
$f$ is compatible with the prolongation relation as follows
\begin{gather*}
f(\alpha)\preceq f(\beta)\quad\Rightarrow\quad \alpha\preceq\beta;
\end{gather*}
\item
for any vertex $v$ that emits at least one and at most finitely many edges, $f$  restricts-corestricts to a bijection
\[
s_F^{-1}(v)\longrightarrow s_E^{-1}(f(v)).
\]
\end{enumerate}
Then $f$ induces a $*$-homomorphism $f_*\colon C^*(F)\to C^*(E)$ given by 
\[
\forall\;v\in F^0\colon\; f_*(P_v):=P_{f(v)}\quad\text{and}\quad \forall\;x\in F^1\colon\;f_*(S_x):=S_{f(x)}\;.
\]
\end{lemma}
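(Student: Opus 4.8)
The plan is to build $f_*$ from the universal property of $C^*(F)$: I will exhibit inside $C^*(E)$ a family of mutually orthogonal projections $\{P_{f(v)}\}_{v\in F^0}$ together with partial isometries $\{S_{f(x)}\}_{x\in F^1}$ — where $S_{f(x)}$ is read through the usual extension of the $S$-notation to the finite path $f(x)\in\mathrm{Path}(E)$ — satisfying the relations \eqref{eq:GA1}, \eqref{eq:GA2} and \eqref{eq:GA3}, and then the universal property produces the desired $*$-homomorphism with the stated values on generators. Throughout I would use that a functor preserves the origin and terminus of every path, so that $s_E(f(\alpha))=f^0(s_F(\alpha))$ and $r_E(f(\alpha))=f^0(r_F(\alpha))$ for all $\alpha\in\mathrm{Path}(F)$, together with the standard identity $S_\alpha^*S_\alpha=P_{r(\alpha)}$, which is immediate from \eqref{eq:GA1} and the partial-isometry relations and which in particular shows that each $S_{f(x)}$ is a partial isometry.

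First I would settle mutual orthogonality. Applying hypothesis (1) to the length-zero paths $\alpha=v$ and $\beta=w$ in both orders shows that $f^0(v)=f^0(w)$ forces $v\preceq w$ and $w\preceq v$, hence $v=w$ by antisymmetry of $\preceq$; thus $f^0$ is injective and $\{P_{f(v)}\}_{v\in F^0}$ consists of mutually orthogonal projections. For \eqref{eq:GA1} take $x\neq y$ in $F^1$: if $S_{f(x)}^*S_{f(y)}\neq 0$, then Lemma~\ref{lem:2.3} gives $f(x)\preceq f(y)$ or $f(y)\preceq f(x)$, whence by hypothesis (1) $x\preceq y$ or $y\preceq x$; but two distinct edges are never $\preceq$-comparable, a contradiction, so $S_{f(x)}^*S_{f(y)}=0$. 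For $x=y$ the identity above gives $S_{f(x)}^*S_{f(x)}=P_{r_E(f(x))}=P_{f(r_F(x))}$, which is exactly \eqref{eq:GA1}. For \eqref{eq:GA3}, Lemma~\ref{specon} applied to the path $f(x)$ yields $S_{f(x)}S_{f(x)}^*\le P_{s_E(f(x))}=P_{f(s_F(x))}$.

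It remains to verify \eqref{eq:GA2}. Let $v\in F^0$ emit at least one and at most finitely many edges. Hypothesis (2) provides a bijection $s_F^{-1}(v)\to s_E^{-1}(f(v))$, $x\mapsto f(x)$; in particular $0<|s_E^{-1}(f(v))|<\infty$ and each $f(x)$ with $x\in s_F^{-1}(v)$ is a genuine edge of $E$. Reindexing the sum along this bijection and invoking \eqref{eq:GA2} in $C^*(E)$ we get
\[
\sum_{x\in s_F^{-1}(v)}S_{f(x)}S_{f(x)}^*=\sum_{e\in s_E^{-1}(f(v))}S_eS_e^*=P_{f(v)}.
\]
Hence all three relations hold, and the universal property of $C^*(F)$ yields a unique $*$-homomorphism $f_*\colon C^*(F)\to C^*(E)$ with $f_*(P_v)=P_{f(v)}$ and $f_*(S_x)=S_{f(x)}$.

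I do not expect a genuine obstacle here: the content is a routine check that the images of the generators form a representation of the defining relations of $C^*(F)$. The two points needing attention are (i) keeping track of the fact that a functor may send an edge to a path of length greater than one (or, for a short loop, even to a vertex), so that $S_{f(x)}$ must be interpreted via the extended $S$-notation and its basic identities; and (ii) noticing that hypothesis (1) does double duty, forcing injectivity of $f^0$ (needed for orthogonality of the $P_{f(v)}$) and, combined with Lemma~\ref{lem:2.3}, annihilating the off-diagonal products in \eqref{eq:GA1}. Hypothesis (2) enters only in \eqref{eq:GA2}, where it is exactly what is required, including the implicit guarantee that a regular vertex of $F$ maps to a regular vertex of $E$ with its emitted edges sent to edges.
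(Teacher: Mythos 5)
Your proposal is correct and follows essentially the same route as the paper's proof: invoke the universal property of $C^*(F)$, derive injectivity of $f$ on vertices from hypothesis (1) for mutual orthogonality, combine Lemma~\ref{lem:2.3} with hypothesis (1) for \eqref{eq:GA1}, use hypothesis (2) for \eqref{eq:GA2}, and Lemma~\ref{specon} for \eqref{eq:GA3}. You merely spell out some steps the paper leaves implicit (the diagonal case of \eqref{eq:GA1} and the reindexing of the sum in \eqref{eq:GA2}).
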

\begin{proof}
Since graph C*-algebras are universal, it suffices to show that all defining relations are preserved. For starters, since the condition (1)
implies the injectivity of $f$, we infer that 
 the set of mutually orthogonal projections is sent to the set of mutually orthogonal projections:
\begin{equation}
P_{f(v)}P_{f(w)}=\delta_{f(v),f(w)}P_{f(v)}=\delta_{v,w}P_{f(v)}.
\end{equation}
Next,
to show that \eqref{eq:GA1} is preserved, it suffices to prove the implication
\begin{equation}
S^*_{f(e_1)}S_{f(e_2)}\neq 0\quad\Rightarrow\quad e_1=e_2\,,
\end{equation}
which follows from combining Lemma~\ref{lem:2.3} with the condition (1).
Finally, showing that  \eqref{eq:GA2}  and \eqref{eq:GA3} are preserved is also straightforward: the former 
follows directly from the condition (2) and the latter from Lemma~\ref{specon}.
\end{proof}

We are now ready to prove our first main result:
\begin{thm}\label{main}
Let  $F_i\subseteq E_i$, $i=1,2$, be admissible inclusions of graphs such that
\begin{enumerate}
\vspace*{-5mm}\item
$E_1$ has no loops, $E_2$ has no short loops at vertices in $E_2^0\setminus F_2^0$, and $E^0_1=E^0_2$,  $F^0_1=F^0_2$;
\item
there is a functor $f\colon E_1\to E_2$ such that: 
it satisfies the condition (1) in Lemma~\ref{functor},
it is $\mathrm{id}$ on objects, and its image is the set of all pointed paths.
\vspace*{-5mm}\end{enumerate}
Then the  induced $*$-homomorphisms exist and render the diagram
\begin{equation}\label{noninj}
\xymatrix{
&C^*(E_1)\ar[dl]_{\pi_1}\ar[dr]^{f_*}&\\
C^*(F_1)\ar[dr]_{f|_*}&& C^*(E_2)\ar[dl]^{\pi_2}\\
&C^*(F_2)&
}
\end{equation}
a pullback diagram of C*-algebras. (If $E_1^0$ is finite, then this is a pullback diagram of unital C*-algebras.) Here
$\pi_1$ and $\pi_2$ are the canonical surjections~\eqref{3.2},
$f_*$ is a $*$-homomorphism of Lemma~\ref{functor}, and $f|_*$ is its restriction-corestriction.
\end{thm}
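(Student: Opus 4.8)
The plan is to turn the span in~\eqref{noninj} into an honest cospan of surjection and injection, show that the resulting comparison map into the C*-algebraic fibre product is an isomorphism, and then read off the unital case. Write $K:=E_1^0\setminus F_1^0=E_2^0\setminus F_2^0$ (equal by hypothesis~(1)). The two vertical maps on the left come from Proposition~\ref{admquot}: by~(A2) the quotient graph $E_i/K$ from~\eqref{quotient} \emph{equals} $F_i$ (same vertices $F_i^0$, same edges $r_{E_i}^{-1}(F_i^0)=F_i^1$), so $C^*(F_i)$ is identified with $C^*(E_i)/I_K^{(i)}$, where $I_K^{(i)}$ is the ideal~\eqref{hereditary} attached to $K$, and under this identification $\pi_i$ is the quotient map; in particular $\ker\pi_i=I_K^{(i)}$, and $\pi_i$ merely kills $P_v$ for $v\in K$ and $S_e$ for $r(e)\in K$.

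First I would check that $f$ satisfies both hypotheses of Lemma~\ref{functor}, so that $f_*$ exists. Condition~(1) is assumed, and combined with functoriality it makes $f$ an order-embedding for $\preceq$, hence injective. For condition~(2), fix $v$ with $1\le|s_{E_1}^{-1}(v)|<\infty$. The crux is that $v$ emits no loop-edge in $E_2$: if $v\notin F_2^0$ this is exactly hypothesis~(1); if $v\in F_2^0$ and $\ell$ were a short loop at $v$, pick a non-loop edge $g$ out of $v$ in $E_2$ (one exists, since otherwise every positive-length path from $v$ would be a product of loops at $v$ and hence not pointed, while $f(e)$ is such a pointed path for any $e\in s_{E_1}^{-1}(v)$); then the $\ell^n g$, $n\ge0$, are pairwise incomparable pointed paths whose only non-trivial pointed prefix is themselves, so each $f^{-1}(\ell^n g)$ is an edge out of $v$ in $E_1$, giving infinitely many such edges and contradicting regularity. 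Consequently every edge of $E_2$ out of $v$ is a length-one pointed path, so equals $f(e)$ for some $e\in s_{E_1}^{-1}(v)$, and the order-embedding property forces each $f(e)$, $e\in s_{E_1}^{-1}(v)$, to have length one; thus $f$ restricts to the required bijection $s_{E_1}^{-1}(v)\to s_{E_2}^{-1}(f(v))$. Because $f$ is the identity on objects, $f_*(S_\alpha S_\beta^*)=S_{f(\alpha)}S_{f(\beta)}^*$ with $r(f(\alpha))=r(f(\beta))=r(\alpha)$, so $f_*(I_K^{(1)})\subseteq I_K^{(2)}$ by~\eqref{hereditary}; hence $f_*$ descends to the map $f|_*\colon C^*(F_1)\to C^*(F_2)$ on quotients, and the square~\eqref{noninj} commutes by construction.

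The two properties of $f_*$ that drive the argument are that it is injective and that it carries $I_K^{(1)}$ \emph{onto} $I_K^{(2)}$. For injectivity, observe that $\{P_v\}_{v\in E_1^0}\cup\{S_{f(e)}\}_{e\in E_1^1}$ is a Cuntz--Krieger $E_1$-family in $C^*(E_2)$: relation~\eqref{eq:GA1} follows from Lemma~\ref{lem:2.3} together with the order-embedding property, \eqref{eq:GA2} from the bijections just established and~\eqref{eq:GA2} in $C^*(E_2)$, and~\eqref{eq:GA3} from Lemma~\ref{specon}. Since each $P_v\ne0$ and $E_1$ has no loops (hence satisfies Condition~(L)), the Cuntz--Krieger uniqueness theorem shows the induced homomorphism — which is precisely $f_*$ — is injective. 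For surjectivity of $f_*|_{I_K^{(1)}}$, note that every path of $E_2$ ending in $K$ is pointed, its last edge having range in $E_2^0\setminus F_2^0$ and so not being a short loop; hence $f$ restricts to a bijection between the paths of $E_1$ ending in $K$ and those of $E_2$ ending in $K$, respecting equality of ranges, so $f_*$ maps the spanning set of $I_K^{(1)}$ from~\eqref{hereditary} bijectively onto that of $I_K^{(2)}$; as the range of the $*$-homomorphism $f_*|_{I_K^{(1)}}$ is closed, we get $f_*(I_K^{(1)})=I_K^{(2)}$.

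Finally I would assemble the pullback. For a C*-algebra $A$ with $\varphi_1\colon A\to C^*(F_1)$ and $\varphi_2\colon A\to C^*(E_2)$ satisfying $f|_*\circ\varphi_1=\pi_2\circ\varphi_2$, the universal property reduces to showing that $\Phi\colon C^*(E_1)\to C^*(F_1)\oplus_{C^*(F_2)}C^*(E_2)$, $x\mapsto(\pi_1(x),f_*(x))$, is an isomorphism. It is injective because $\pi_1(x)=0=f_*(x)$ forces $x=0$ by injectivity of $f_*$. It is surjective because, given $(a,b)$ with $f|_*(a)=\pi_2(b)$, any lift $\tilde a\in C^*(E_1)$ of $a$ has $f_*(\tilde a)-b\in\ker\pi_2=I_K^{(2)}$, so $f_*(i)=f_*(\tilde a)-b$ for some $i\in I_K^{(1)}$, and $x:=\tilde a-i$ satisfies $\pi_1(x)=a$ and $f_*(x)=b$. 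When $E_1^0$ is finite, $\sum_v P_v$ is a unit in each of the four algebras and all four maps preserve it, so the diagram is a pullback of unital C*-algebras. The main obstacle is the verification feeding Lemma~\ref{functor}(2) and relation~\eqref{eq:GA2} for the Cuntz--Krieger family — namely that a regular vertex of $E_1$ acquires no loop-edge in $E_2$ and has its out-edges matched bijectively by $f$; this is exactly where the loop hypotheses in~(1) are needed, and everything downstream is then formal C*-algebra.
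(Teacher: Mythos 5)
Your proposal is correct and follows essentially the same route as the paper: verify condition (2) of Lemma~\ref{functor} at regular vertices using the loop hypotheses, deduce injectivity of $f_*$ from the Cuntz--Krieger uniqueness theorem since $E_1$ has no loops, show $\ker\pi_2=f_*(\ker\pi_1)$ because every path ending in $E_2^0\setminus F_2^0$ is pointed and hence in the image of $f$, and conclude by the surjective-plus-injective pullback criterion (which you prove directly where the paper cites Pedersen). Your packaging of the no-short-loop step via the infinite family $\ell^n g$ of pointed paths is a slightly tidier variant of the paper's count of non-factorizing pointed paths, but the substance is identical.
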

\begin{proof}
We begin by proving that $f_*$ and $f|_*$ are well-defined injective $*$-homomorphisms. 
To see that $f_*$ is well defined, by Lemma~\ref{functor} and the assumption (2), it suffices to check the condition (2) of Lemma~\ref{functor}.
To this end, take any regular vertex $v\in E^0_1$ and any edge $e \in s^{-1}(v)$. Then, as the image of $f$ is the set of pointed paths, 
$f(e)$ is a pointed path
from $v$ to~$r(e)$. 

Suppose that $f(e)$ factorizes through a third vertex~$w$. Then we can write $f(e)=\alpha\beta$, where $\alpha$ is a pointed
path from $v$ to $w$ and $\beta$ is a pointed path from $w$ to $r(e)$. Indeed, deleting any intial subpath from a pointed path  always yields
a pointed path, and making all loops based at $w$ part of $\beta$ makes $\alpha$ a pointed path. Furthermore, as $f$ is surjective on 
the set of pointed paths, we can write $f(e)=\alpha\beta=f(\alpha')f(\beta')=f(\alpha'\beta')$. Combining it with the injectivity of $f$,
which follows from the condition (1) in Lemma~\ref{functor}, we get a contradiction $e=\alpha'\beta'$ (the edge $e$ is not a path
factorizing through the vertex~$w$). Hence $f(e)$ is a pointed path from $v$ to $r(e)$ that does not factorize through any third vertex.

If there is a loop in $E_2$ based at $v$, then there are infinitely many
 non-factorizing pointed paths from $v$ to $r(e)$, and (because $f$ is a functor) none of them can be the image of a path that factorizes
 through a third vertex. Consequently, as $f$ is bijective when corestricted to the set of pointed paths,
 and there are no loops in $E_1$, there must be infinitely many edges in $E_1$
from $v$ to $r(e)$, which contradicts the assumption that $v$ is a regular vertex in~$E_1$. Hence, there is no loop in $E_2$ based at~$v$,
so $f(e)$ is an edge. 

Next, if $f(\alpha)\in E^1_2$, then $\alpha\in E^1_1$ because $f$ is an injective functor that is $\id$ on the set of veritices. Indeed,
suppose that $\alpha=e_1\dots e_n$, where $e_i$'s are edges. Then $f(\alpha)=f(e_1\dots e_n)=f(e_1)\dots f(e_n)$ is of length 
at least $n$, as $f(e_i)$ cannot be a vertex. Hence $n=1$, i.e.\ $\alpha$ is an edge, so any edge emitted from $v$ in $E_2$ comes
from an  edge emitted from $v$ in $E_1$. Combining this with the injectivity of $f$ and the above established fact that $f(e)$ is an
edge, we conclude that the condition (2) in Lemma~\ref{functor} is satisfied. 

Thus we obtain a well-defined $*$-homomorphism $f_*$ that is
injective  by~\cite[Corollary~1.3]{szy-gen} because $E_1$ has no loops. Furthermore, by the admissibility condition Definition~\ref{adm}(A2),
it is clear that $f$ restricted to the subgraph $F_1$ corestricts to $F_2$ yielding a restriction-corestriction $f|_*$  of~$f_*$. The  
$*$-homomorphism $f|_*$ is   injective because $f_*$ is injective.

It is straightforward to check that the maps $\pi_1$, $\pi_2$, $f_*$ and $f|_*$ make 
the  diagram~\eqref{noninj} commutative. Therefore, as $\pi_1$ and $\pi_2$ are surjective and $f_*$ and $f|_*$ are injective,
due to \cite[3.1 Proposition]{ped-99}, to show that \eqref{noninj} is a pullback diagram, it suffices to prove that
\begin{equation}\label{pullker2}
\ker\pi_2\subseteq f_*(\ker\pi_1).
\end{equation}
To obtain the above inclusion, we use the characterization of ideals associated to hereditary subsets~\eqref{hereditary}:
\begin{gather}
\ker\pi_1=\overline{\rm span}\left\{S_\alpha S_\beta^*~|~\alpha, \beta\in{\rm Path}(E_1),
~r(\alpha)=r(\beta)\in E_1^0\setminus F_1^0\right\},\\
\ker\pi_2=\overline{\rm span}\left\{S_\gamma S_\delta^*~|~\gamma, \delta~\in{\rm Path}(E_2),
~r(\gamma)=r(\delta)\in E_2^0\setminus F_2^0\right\}.
\end{gather}
By the assumption (1),  all paths in $E_2$ terminating in $E_2^0\setminus F_2^0$ are pointed, so they are in the image of~$f$. Therefore,
as $E_2^0\setminus F_2^0=E_1^0\setminus F_1^0$ by the assumption~(1), we conclude that the inclusion \eqref{pullker2} holds
at the algebraic level.
Finally,
as any $*$-homomorphism between C*-algebras is a continuous map whose image is closed, we infer the desired
inclusion at the C*-level.
\end{proof}

\section{Extending graphs over sinks}
\noindent
To generalize the diagram~\eqref{podpull} even further (e.g.\ to allow loops in $E_1$ in the pullback theorem of the previous section),
we first need to determine suitable conditions under which the graph-algebra construction preserves 
pushouts of graphs over sinks. 

The general setup assumptions (GS) are as follows:
\vspace*{-2mm}
\begin{itemize}
\item $E$ and $H$ are graphs;
\item $X$ is a set regarded as a graph with no edges;
\item $\iota_E:X\to E^0$ and $\iota_H:X\to H^0$ are injective maps defining the pushout
\vspace*{-2mm}
\begin{equation}\label{}
\begin{gathered}
\xymatrix{
& E^0\underset{X}{\sqcup}H^0 \\
E^0 \ar[ru]
& 
& 
H^0; \ar[lu]\\
& X\ar[lu]^{\iota_E} \ar[ru]_{\iota_H}
}
\end{gathered}
\end{equation}
\item
$E\sqcup_XH:=(E^0{\sqcup}_XH^0,E^1\sqcup H^1,\pi\circ(s_E\sqcup s_H), \pi\circ(r_E\sqcup r_H))$, where
$\pi$ is the canonical quotient map. 
\end{itemize}

Next, let  $\iota_{E*}:C^*(X)\to C^*(E)$ and $\iota_{H*}:C^*(X)\to C^*(H)$ be the induced 
\mbox{$*$-homo}\-morphisms (see Lemma~\ref{functor}). Define
\begin{equation*}
  C^*(E)\underset{C^*(X)}{\bullet} C^*(H):=(C^*(E)\underset{C^*(X)}{\ast} C^*(H))/\langle P_vP_w\;|\; v\in E^0\setminus \iota_E(X),\,
 w\in H^0\setminus \iota_H(X)  \rangle\,.
\end{equation*}
Here we divide the amalgamated free product by the ideal generated by the product of non-identified projections.

\begin{lemma}\label{le.push}
 Assume that at least one of the maps $\iota_H$ and $\iota_E$ takes its values in the sinks of the respective graph. 
 Then the natural assignment of elements defines an isomorphism of C*-algebras:
  \begin{equation}\label{eq:1}
    C^*(E)\underset{C^*(X)}{\bullet} C^*(H)\longrightarrow C^*(E\underset{X}{\sqcup} H).
  \end{equation}
\end{lemma}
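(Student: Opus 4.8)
The plan is to exhibit mutually inverse $*$-homomorphisms between the two algebras in \eqref{eq:1} using the universal properties on both sides. First I would construct a map from left to right. The amalgamated free product $C^*(E)\ast_{C^*(X)}C^*(H)$ carries, by definition, a copy of the generators $P_v,S_e$ of $C^*(E)$ and $P_w,S_h$ of $C^*(H)$, with the projections indexed by $X$ identified via $\iota_{E*}$ and $\iota_{H*}$. In $C^*(E\sqcup_X H)$ the images $\pi(X)\subseteq (E\sqcup_X H)^0$ give projections, and because $\iota_E,\iota_H$ are injective the generator families $\{P_v,S_e\}$ and $\{P_w,S_h\}$ both satisfy the Cuntz--Krieger relations of $E\sqcup_X H$ when restricted; so by universality of the free product there is a $*$-homomorphism $C^*(E)\ast_{C^*(X)}C^*(H)\to C^*(E\sqcup_X H)$ sending generators to the corresponding generators. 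The non-identified projections $P_v$ ($v\in E^0\setminus\iota_E(X)$) and $P_w$ ($w\in H^0\setminus\iota_H(X)$) become mutually orthogonal projections in $C^*(E\sqcup_X H)$ — their labels are distinct vertices of the pushout graph — so $P_vP_w\mapsto 0$, and the map descends to the quotient $C^*(E)\bullet_{C^*(X)}C^*(H)$.

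Next I would construct the map from right to left, which is where the sink hypothesis enters. I want to define a $*$-homomorphism $C^*(E\sqcup_X H)\to C^*(E)\bullet_{C^*(X)}C^*(H)=:A$ by sending each vertex projection of $E\sqcup_X H$ and each edge partial isometry to its evident image in $A$ (projections from $X$ go to the amalgamated copy; edges of $E$ to the $C^*(E)$-image, edges of $H$ to the $C^*(H)$-image). By universality it suffices to check (GA1)--(GA3) hold for these elements in $A$. Conditions (GA1) and (GA3) are immediate since they already hold inside each factor and involve only one factor at a time. The delicate relation is (GA2): for a vertex $x\in\iota_E(X)=\iota_H(X)$ which is \emph{regular} in $E\sqcup_X H$, we must verify $\sum_{e\in s^{-1}(x)}S_eS_e^*=P_x$ in $A$. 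Here $s^{-1}(x)$ in the pushout is the disjoint union of $s_E^{-1}(x)$ and $s_H^{-1}(x)$. The assumption that (say) $\iota_H$ lands in the sinks of $H$ forces $s_H^{-1}(x)=\emptyset$, so the sum is just $\sum_{e\in s_E^{-1}(x)}S_eS_e^*$, which equals $P_x$ already in $C^*(E)$ (and the amalgamation carries $P_x$ to $P_x$). Thus no relation mixing the two factors is ever imposed by (GA2), and the map is well-defined. Symmetrically if $\iota_E$ lands in sinks of $E$.

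Finally I would check the two composites are identities. Both composites fix the generators of the respective universal algebras by construction, so by universality they are the identity maps; hence \eqref{eq:1} is an isomorphism. The remark about the unital case is handled separately: if the relevant vertex sets are finite, the sum of all vertex projections is the unit in each graph algebra, and one tracks that the maps are unital.

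\textbf{Main obstacle.} The crux is verifying relation (GA2) at the amalgamating vertices in the right-to-left direction, and seeing precisely why the sink hypothesis is exactly what makes it go through — without it, a vertex $x\in X$ that emits edges in \emph{both} $E$ and $H$ and is regular in the pushout would impose the relation $\sum_{e\in s_E^{-1}(x)}S_eS_e^*+\sum_{h\in s_H^{-1}(x)}S_hS_h^*=P_x$, which is not a consequence of the defining relations of the factors and the amalgamation, so the map would fail to exist. One should also be careful that a vertex in $X$ which is a sink in $H$ but emits \emph{infinitely many} edges in $E$ is not regular in the pushout, so (GA2) is vacuous there anyway; the only subtlety is the finite-emission case, handled as above. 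A minor additional point is confirming that the quotient defining $C^*(E)\bullet_{C^*(X)}C^*(H)$ does not collapse more than intended — but since the left-to-right map is already shown to exist and is clearly surjective onto $C^*(E\sqcup_X H)$, and the right-to-left map provides a section, no such collapse occurs.
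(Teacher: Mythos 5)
Your overall strategy---map the amalgamated free product naturally into $C^*(E\sqcup_X H)$, observe that the cross products $P_vP_w$ die, and then invert via the universal property of $C^*(E\sqcup_X H)$---is essentially the paper's argument; the paper constructs the same embeddings $j_E,j_H$ and the induced surjection $\pi_\sqcup$, and then simply asserts that $\ker\pi_\sqcup$ coincides with the kernel of the defining quotient onto the $\bullet$-product, so your explicit two-sided inverse is a legitimate (indeed more detailed) way of establishing that last step. However, there is a genuine gap in your left-to-right direction, caused by misplacing where the sink hypothesis is used. To obtain a map out of $C^*(E)\ast_{C^*(X)}C^*(H)$ you need, in particular, a $*$-homomorphism $C^*(E)\to C^*(E\sqcup_X H)$ with $P_v\mapsto P_v$ and $S_e\mapsto S_e$; by universality of $C^*(E)$ this requires the family $\{P_v,S_e\}_{v\in E^0,\,e\in E^1}$ \emph{inside} $C^*(E\sqcup_X H)$ to satisfy the relations of $E$ --- not, as you write, the relations of $E\sqcup_X H$ ``when restricted,'' and the injectivity of $\iota_E,\iota_H$ is irrelevant to this point. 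The delicate relation is again \eqref{eq:GA2}: if $v=\iota_E(x)$ is regular in $E$ while $\iota_H(x)$ emits edges in $H$, then $v$ emits strictly more edges in the pushout graph, and $\sum_{e\in s_E^{-1}(v)}S_eS_e^*=P_v$ fails in $C^*(E\sqcup_X H)$. (Concretely: if $E$ and $H$ each consist of a single edge leaving the amalgamated vertex, then $C^*(E)\cong M_2(\C)$ has $P_v=S_eS_e^*$, whereas in the pushout $P_v=S_eS_e^*+S_hS_h^*$, so no such homomorphism exists.) Thus the sink hypothesis is exactly what makes $j_E$ and $j_H$ well defined, i.e.\ it is needed in the left-to-right direction for the very same \eqref{eq:GA2} reason you correctly isolate in the right-to-left direction; this is the content of the opening sentence of the paper's proof.

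A secondary, repairable point in your right-to-left direction: \eqref{eq:GA1} is \emph{not} a relation ``involving only one factor at a time'' when $e\in E^1$ and $f\in H^1$; one must verify $S_e^*S_f=0$ in $C^*(E)\bullet_{C^*(X)}C^*(H)$, which is not a relation holding inside either factor. It does follow from $S_e^*=S_e^*P_{s_E(e)}$ and $S_f=P_{s_H(f)}S_f$ together with orthogonality of the relevant vertex projections: supplied by the $\bullet$-quotient when neither source lies in the image of $X$, by the relations of a single factor when exactly one does, and by the sink hypothesis (which rules out $e$ and $f$ sharing an amalgamated source) otherwise. With these two points filled in, your argument is complete.
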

\begin{proof}
 Since  $\iota_E:X\to E^0$ or $\iota_H:X\to H^0$ takes values in the sinks of $E$ or~$H$, respectively, all edge 
 relations in $C^*(E\sqcup_XH)$ involving vertices in the
  image of $X$ are of one of two types: either they refer to edges only in $E^1$, or to edges only in~$H^1$. Hence, there are
 $*$-homomorphisms 
 \begin{equation}\label{jh}
 j_E\colon C^*(E)\longrightarrow C^*(E\sqcup_X H)\quad\text{and}\quad j_H\colon C^*(H)\longrightarrow C^*(E\sqcup_X H)
 \end{equation}
 given the natural assignment of elements. Furthermore, as $(E\sqcup_X H)^0=
 E^0\sqcup_X H^0$, they induce a surjective $*$-homomorphism 
 \begin{equation}
 \pi_\sqcup\colon C^*(E)\underset{C^*(X)}{\ast}C^*(H)\longrightarrow C^*(E\underset{X}{\sqcup} H).
 \end{equation}
 Finally, as the kernel of $\pi_\sqcup$ coincides with the kernel of the defining surjection
 \begin{equation}
\pi_\bullet\colon C^*(E)\underset{C^*(X)}{\ast} C^*(H)\longrightarrow C^*(E)\underset{C^*(X)}{\bullet} C^*(H),
\end{equation}
 the claim follows.
\end{proof}

Now, consider three graphs $E_1$, $E_2$ and $H$ with injective maps $\iota_{E_1}\!:\!X\to E_1^0$, \mbox{$\iota_{E_2}\!:\!X\to E_2^0$}, 
and $\iota_{H}\!:\!X\to H^0$.  Assume also  that $\iota_{E_1}(X)$ and $\iota_{E_2}(X)$ consist of sinks of the two 
respective graphs $E_1$ and $E_2$. Now, consider a C*-algebra homomorphism
\begin{equation}\label{4.6}
  \delta:C^*(E_1)\longrightarrow A
\end{equation}
annihilating the vertex projections of $\iota_{E_1}(X)\subseteq E_1^0$. Then $\delta$ and the zero map $C^*(H)\to A$ induce
a $*$-homomorphism on the amalgamated product that annihilates the kernel of $\pi_\bullet$. Hence, by Lemma~\ref{le.push}, 
$\delta$   extends to
\begin{equation}\label{4.7}
  \delta':C^*(E_1\underset{X}{\sqcup}H)\longrightarrow A.
\end{equation}

\begin{lemma}\label{le.ker-desc}
Let $j^1_H\colon C^*(H)\to C^*(E_1\sqcup_XH) $ be the map defined in~\eqref{jh}. Then 
\[
\ker\delta'= j_{E_1}(\ker\delta)+\langle j^1_H(C^*(H))\rangle.
\]
\end{lemma}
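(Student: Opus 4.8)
The plan is to identify $\ker\delta'$ with the ideal on the right by exploiting the description of $C^*(E_1\sqcup_X H)$ as the quotient $C^*(E_1)\bullet_{C^*(X)}C^*(H)$ from Lemma~\ref{le.push}, and by using the universal property of the amalgamated free product. First I would observe that the inclusion $\ker\delta'\supseteq j_{E_1}(\ker\delta)+\langle j^1_H(C^*(H))\rangle$ is the easy direction: by construction $\delta'\circ j_{E_1}=\delta$ (so $j_{E_1}(\ker\delta)\subseteq\ker\delta'$), and $\delta'\circ j_H$ is the zero map by the very definition of $\delta'$ (it is induced by $\delta$ together with the zero map $C^*(H)\to A$), so $j^1_H(C^*(H))\subseteq\ker\delta'$, whence the generated ideal is contained in $\ker\delta'$ as well.

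For the reverse inclusion, I would pass to the quotient. Set $J:=j_{E_1}(\ker\delta)+\langle j^1_H(C^*(H))\rangle$, an ideal of $C^*(E_1\sqcup_X H)$ by the previous paragraph. The goal is to show the induced map $\overline{\delta'}\colon C^*(E_1\sqcup_X H)/J\to A$ is injective. The key computation is to identify $C^*(E_1\sqcup_X H)/J$. Dividing first by $\langle j^1_H(C^*(H))\rangle$ kills all the projections $P_w$ for $w\in H^0$, and in particular identifies the subalgebra $C^*(X)$ image with its image in $C^*(E_1)$; since $\iota_{E_1}(X)$ consists of sinks of $E_1$, these projections are central-idempotent-like in the relevant sense and the quotient of $C^*(E_1)\bullet_{C^*(X)}C^*(H)$ by $\langle j^1_H(C^*(H))\rangle$ collapses to $C^*(E_1)$ itself — more precisely, $j_{E_1}$ descends to an isomorphism $C^*(E_1)\xrightarrow{\;\sim\;}C^*(E_1\sqcup_X H)/\langle j^1_H(C^*(H))\rangle$. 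Under this isomorphism the residual ideal generated by $J$ becomes exactly $\ker\delta$, so $C^*(E_1\sqcup_X H)/J\cong C^*(E_1)/\ker\delta$, and $\overline{\delta'}$ becomes the canonical injection $C^*(E_1)/\ker\delta\hookrightarrow A$. Injectivity follows, giving $\ker\delta'\subseteq J$.

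The main obstacle I anticipate is making rigorous the claim that quotienting $C^*(E_1\sqcup_X H)$ by the ideal generated by $j^1_H(C^*(H))$ returns $C^*(E_1)$ on the nose. One has to check that killing all $P_w$, $w\in H^0$, does not impose any extra relations on the generators coming from $E_1$: the only relations in $C^*(E_1\sqcup_X H)$ that mix $E_1$-data and $H$-data are the Cuntz--Krieger relations at the amalgamated vertices $\iota_{E_1}(X)=\iota_{E_2}(X)\subseteq$ sinks, but precisely because these vertices are sinks of $E_1$ there is \emph{no} relation (GA2) forcing $P_{\iota_{E_1}(x)}$ to be a sum of range projections of $E_1$-edges, and the same vertices, viewed in $H^0$, may fail to be sinks but their defining relations involve only $H^1$-edges, all of whose generators we are annihilating. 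Thus setting those $P_w$ to zero leaves a clean presentation of $C^*(E_1)$. I would spell this out using universality: construct mutually inverse $*$-homomorphisms between $C^*(E_1)$ and $C^*(E_1\sqcup_X H)/\langle j^1_H(C^*(H))\rangle$ by checking the defining relations on generators, exactly as in the proof of Lemma~\ref{le.push}. Once this identification is in hand, the rest is the bookkeeping of correspondence of ideals under an isomorphism, which is routine.
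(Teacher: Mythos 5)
Your overall strategy---factor $\delta'$ through the quotient by $\langle j^1_H(C^*(H))\rangle$ and identify that quotient explicitly---is the same as the paper's, and your easy inclusion is fine. But the step you yourself flag as the main obstacle is resolved incorrectly: $j_{E_1}$ does \emph{not} descend to an isomorphism $C^*(E_1)\xrightarrow{\;\sim\;}C^*(E_1\sqcup_X H)/\langle j^1_H(C^*(H))\rangle$. The amalgamated vertex projections are the problem: for $x\in X$ the vertices $\iota_{E_1}(x)$ and $\iota_H(x)$ are identified in the pushout graph, so $j_{E_1}(P_{\iota_{E_1}(x)})=j^1_H(P_{\iota_H(x)})\in j^1_H(C^*(H))$ and is killed in the quotient, whereas $P_{\iota_{E_1}(x)}\neq 0$ in $C^*(E_1)$. (Any $E_1$-edge ranging into $\iota_{E_1}(X)$ dies too, since $S_e=S_eP_{r(e)}$.) So quotienting does impose extra relations on the $E_1$-generators, exactly at the amalgamated vertices, and your supporting discussion of the presentation is wrong there. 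The correct identification, which is what the paper uses, is $C^*(E_1\sqcup_X H)/\langle j^1_H(C^*(H))\rangle\cong C^*(E_1)/\langle \iota_{E_1*}(C^*(X))\rangle$. Your argument survives this correction, because $\delta$ annihilates the projections $P_v$ for $v\in\iota_{E_1}(X)$ by hypothesis, hence $\langle \iota_{E_1*}(C^*(X))\rangle\subseteq\ker\delta$ and the double quotient is still $C^*(E_1)/\ker\delta$, which embeds in $A$; but as written your proof asserts a false isomorphism.

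A smaller point: you declare $J=j_{E_1}(\ker\delta)+\langle j^1_H(C^*(H))\rangle$ to be an ideal ``by the previous paragraph,'' but that paragraph only shows $J\subseteq\ker\delta'$. That $J$ is an ideal---equivalently, that $\langle j_{E_1}(\ker\delta)\rangle+\langle j^1_H(C^*(H))\rangle=j_{E_1}(\ker\delta)+\langle j^1_H(C^*(H))\rangle$---is precisely the reason the lemma can be stated with $j_{E_1}(\ker\delta)$ rather than the ideal it generates; it follows from the surjectivity of $j_{E_1}$ onto the quotient by $\langle j^1_H(C^*(H))\rangle$, and the paper records it as a separate step. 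It is a short argument, but it is not free.
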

\begin{proof}
The inclusion $\ker\delta'\supseteq\langle j_{E_1}(\ker\delta)\rangle+\langle j^1_H(C^*(H))\rangle$ 
is clear by the construction of~$\delta'$. For the other inclusion, note that,
  as $\delta'$ annihilates $C^*(H)$, it factors as
  \begin{equation}
    C^*(E_1\sqcup_X H)\to C^*(E_1\sqcup_XH)/\langle j^1_H(C^*(H))\rangle\cong C^*(E_1)/\langle \iota_{E_1*}(C^*(X))\rangle\to A,
  \end{equation}
  where the last map is induced by $\delta:C^*(E_1)\to A$. The inclusion follows from this factorization. Finally,
as 
\begin{equation}
\langle j_{E_1}(\ker\delta)\rangle= j_{E_1}(\ker\delta)+\langle j_{E_1}(\ker\delta)\rangle\cap\langle j^1_H(C^*(H))\rangle,
\end{equation}
we infer that
\begin{equation}\label{4.10}
\langle j_{E_1}(\ker\delta)\rangle+\langle j^1_H(C^*(H))\rangle=j_{E_1}(\ker\delta)+\langle j^1_H(C^*(H))\rangle,
\end{equation}
which ends the proof.
\end{proof}

Assume now that, under the general setup assumptions (GS) and the assumptions preceding~\eqref{4.6}, 
we have a pullback diagram
\begin{equation}\label{eq:2}
\begin{gathered}
\xymatrix{
& C^*(E_1) \ar[ld]_\delta \ar[rd]^\phi \\
A  \ar[rd]_\rho
& 
& C^*(E_2) \ar[ld]^\theta\\
& B
}
\end{gathered}
\end{equation}
of $*$-homomorphism of C*-algebras.
Here $\phi$ is an injective $*$-homomorphism  sending all vertex projections to vertex projections and 
all partial isometries associated with edges to partial isometries associated with paths, intertwining $\iota_{E_1}$ with 
$\iota_{E_2}$, and sending the projections labeled by $E_1^0\setminus \iota_{E_1}(X)$ to 
projections labeled by $E_2^0\setminus \iota_{E_2}(X)$.
We also assume that all partial isometries in $C^*(E_2)$ associated with paths ending in
$\iota_{E_2}(X)$ are in the image of~$\phi$. Furthermore, we assume that $\delta$ is a $*$-homomorphism
annihilating the vertex projections labeled by $\iota_{E_1}(X)$, and
$A$ and $B$ are arbitrary C*-algebras fitting into the pullback diagram~\eqref{eq:2} for some $*$-homomorphisms $\theta$
and~$\rho$.

Note that the assumptions made on $\phi$ allow us to define its extension
\begin{equation}\label{psi}
\psi:C^*(E_1\underset{X}{\sqcup}H)\longrightarrow C^*(E_2\underset{X}{\sqcup}H).
\end{equation}
Indeed, we can use the isomorphism \eqref{eq:1} and observe that the conditions on $\phi$ allow us to extend it by 
$\id\colon C^*(H)\to C^*(H)$ to
\begin{equation}\label{psi}
C^*(E_1)\underset{C^*(X)}{\bullet} C^*(H)\longrightarrow C^*(E_2)\underset{C^*(X)}{\bullet} C^*(H).
\end{equation}
This brings us to the second main result of the paper:
\begin{thm}\label{le.blob}
Under the general setup assumptions (GS) and the additional assumptions preceding \eqref{4.6},
the pullback diagram \eqref{eq:2} of $*$-homomorphisms of C*-algebras induces the following pullback diagram
of $*$-homomorphisms of C*-algebras:
\begin{equation}\label{sinkext}
\begin{gathered}
\xymatrix{
& C^*(E_1\underset{X}{\sqcup}H) \ar[ld]_{\delta'} \ar[rd]^\psi \\
A  \ar[rd]_\rho
& 
& C^*(E_2\underset{X}{\sqcup}H). \ar[ld]^{\theta'}\\
& B
}
\end{gathered}
\end{equation} 
Here $\delta'$ and $\theta'$ are defined by \eqref{4.7}, and $\psi$ is defined by~\eqref{psi}.
\end{thm}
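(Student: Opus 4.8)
The plan is to verify the pullback property of \eqref{sinkext} by checking the hypotheses of the elementary pullback criterion \cite[3.1 Proposition]{ped-99}, exactly as in the proof of Theorem~\ref{main}: one shows that the square commutes, that $\psi$ is injective, that $\theta'$ is surjective, and that $\ker\theta'\subseteq\psi(\ker\delta')$, after which \cite{ped-99} gives the conclusion. First I would record commutativity: on the generators of $C^*(E_1\sqcup_XH)$ coming from $E_1$ this is just the commutativity of \eqref{eq:2}, and on the generators coming from $H$ both composites $\rho\circ\delta'$ and $\theta'\circ\psi$ vanish (since $\delta'$ annihilates $C^*(H)$ and $\psi$ restricts to $\id$ on $C^*(H)$ followed by $\theta'$, which also annihilates $C^*(H)$), so the square commutes.

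Next I would deal with the algebraic input. Surjectivity of $\theta'$: by the pullback \eqref{eq:2} the map $\theta\colon C^*(E_2)\to B$ is surjective (a pullback of a surjection is a surjection, and $\delta$ is surjective onto $A$ — or at least $\rho$ is surjective onto its image, which suffices for the standard criterion), and $\theta'$ extends $\theta$, so its image already contains the image of $\theta$; since $\theta'$ also kills the $H$-part, surjectivity of $\theta'$ reduces to that of $\theta$. Injectivity of $\psi$: this is the crux and I will treat it via Lemma~\ref{le.ker-desc}. Applying that lemma with $A$ replaced by $C^*(E_2\sqcup_XH)$ and $\delta$ replaced by the composite $C^*(E_1)\xrightarrow{\phi}C^*(E_2)\hookrightarrow C^*(E_2\sqcup_XH)$ (which indeed annihilates the projections labeled by $\iota_{E_1}(X)$, because $\phi$ intertwines $\iota_{E_1}$ with $\iota_{E_2}$ and $\iota_{E_2}(X)$ consists of sinks whose projections are not identified away — more precisely one checks $\ker\psi = j_{E_1}(\ker(j_{E_2}\circ\phi))$), one gets that $\ker\psi$ is controlled by $\ker(j_{E_2}\circ\phi)$; since $\phi$ is injective by assumption and $j_{E_2}$ is injective on $C^*(E_2)$ (its kernel is $\langle j^1_{H\!}(C^*(H))\rangle\cap C^*(E_2)=0$ because the $E_2$- and $H$-parts only share the sink projections), we conclude $\ker\psi=0$.

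Finally, for the key inclusion $\ker\theta'\subseteq\psi(\ker\delta')$, I would use Lemma~\ref{le.ker-desc} twice to get explicit descriptions
\[
\ker\delta'=j_{E_1}(\ker\delta)+\langle j^1_H(C^*(H))\rangle,\qquad
\ker\theta'=j_{E_2}(\ker\theta)+\langle j^1_H(C^*(H))\rangle,
\]
and then argue term by term. The $H$-part is handled trivially: $\psi$ maps $\langle j^1_H(C^*(H))\rangle$ in $C^*(E_1\sqcup_XH)$ isomorphically onto $\langle j^1_H(C^*(H))\rangle$ in $C^*(E_2\sqcup_XH)$ (as $\psi$ is $\id$ on the $H$-part), so that summand of $\ker\theta'$ lies in $\psi(\ker\delta')$. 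The remaining task is to show $j_{E_2}(\ker\theta)\subseteq\psi(\ker\delta')$; using the spanning description \eqref{hereditary} of $\ker\theta$ in $C^*(E_2)$ (as generated by the hereditary saturated set $\iota_{E_2}(X)$, or whatever hereditary set underlies $\theta$), every generator $S_\gamma S_\delta^*$ with $r(\gamma)=r(\delta)\in\iota_{E_2}(X)$ is a product of partial isometries associated with paths ending in $\iota_{E_2}(X)$, hence lies in the image of $\phi$ by the standing assumption on $\phi$; pulling back along $\phi$ into $C^*(E_1)$, the preimage lies in $\ker\delta$ because \eqref{eq:2} is a pullback (so $\ker\delta=\phi^{-1}(\ker\theta)$) — and here one invokes, just as in Theorem~\ref{main}, that $\ker\pi_2\subseteq f_*(\ker\pi_1)$-type inclusion which in the pullback \eqref{eq:2} reads $\ker\theta\subseteq\phi(\ker\delta)$. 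Chasing this through $j_{E_1}$ and $\psi$ gives the desired containment, and \cite{ped-99} then yields that \eqref{sinkext} is a pullback.

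I expect the main obstacle to be the injectivity of $\psi$ together with the careful bookkeeping needed to justify that the two ideal-kernel descriptions from Lemma~\ref{le.ker-desc} really apply in the situations at hand — in particular verifying that $j_{E_2}\circ\phi$ annihilates exactly the projections labeled by $\iota_{E_1}(X)$ and nothing more, so that Lemma~\ref{le.ker-desc} can be invoked with that map in place of $\delta$. Everything else (commutativity, surjectivity of $\theta'$, the $H$-part of the kernel inclusion) is routine diagram-chasing, and the $E_2$-part of the kernel inclusion is a direct transcription of the corresponding step in the proof of Theorem~\ref{main}, reusing the hypothesis that all partial isometries in $C^*(E_2)$ associated with paths ending in $\iota_{E_2}(X)$ lie in the image of $\phi$.
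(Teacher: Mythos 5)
Your overall skeleton (commutativity, Pedersen's criterion, Lemma~\ref{le.ker-desc} for the kernels, and the pullback property of \eqref{eq:2} giving $\ker\theta\subseteq\phi(\ker\delta)$) matches the paper, but two of your key steps do not work as stated. First, your argument for the injectivity of $\psi$ misapplies Lemma~\ref{le.ker-desc}: that lemma describes the kernel of an extension \emph{by the zero map} on $C^*(H)$ of a homomorphism that \emph{annihilates} the projections labeled by $\iota_{E_1}(X)$, whereas $\psi$ is the extension of $j_{E_2}\circ\phi$ by the \emph{identity} on $C^*(H)$, and $j_{E_2}\circ\phi$ does \emph{not} annihilate those projections --- $\phi$ intertwines $\iota_{E_1}$ with $\iota_{E_2}$, so it sends $P_{\iota_{E_1}(x)}$ to the nonzero projection $P_{\iota_{E_2}(x)}$, which survives in $C^*(E_2\sqcup_XH)$ (it is identified with a vertex projection of $H$, not killed). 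So neither the hypothesis nor the conclusion of the lemma applies, and the claimed identity $\ker\psi=j_{E_1}(\ker(j_{E_2}\circ\phi))$ is unjustified. The paper instead gets injectivity of $\psi$ from the general Cuntz--Krieger uniqueness theorem of Szyma\'nski, using that $\psi$ kills no vertex projections and that loops without exit in $E_1\sqcup_XH$ remain loops without exit; some such graph-algebraic input is genuinely needed here.

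Second, the containment $\langle j^2_H(C^*(H))\rangle\subseteq\psi(\ker\delta')$ is not ``handled trivially'': the \emph{ideal} generated by $j^2_H(C^*(H))$ in $C^*(E_2\sqcup_XH)$ is strictly larger than the image of the corresponding ideal of $C^*(E_1\sqcup_XH)$ unless one uses more. It contains elements $S_\alpha S_\beta^*$ where $\alpha$ or $\beta$ begins with an $E_2$-path ending in $\iota_{E_2}(X)$, and such $E_2$-paths need not come from $E_1$ a priori. This is exactly where the standing hypothesis that all partial isometries in $C^*(E_2)$ associated with paths ending in $\iota_{E_2}(X)$ lie in the image of $\phi$ must be invoked (the paper's equations \eqref{whatever2}--\eqref{25}); you instead spend that hypothesis on the summand $j_{E_2}(\ker\theta)$, where it is not needed --- there the one-line argument $j_{E_2}(\phi(\ker\delta))=\psi(j_{E_1}(\ker\delta))$ combined with $\ker\theta\subseteq\phi(\ker\delta)$ suffices, and your detour through a spanning description of $\ker\theta$ as a hereditary-subset ideal is unwarranted since $B$ and $\theta$ are arbitrary. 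A smaller point: $\theta$ need not be surjective (neither $\delta$ nor $\rho$ is assumed onto), so the relevant Pedersen condition is the image condition $\rho^{-1}(\theta'(C^*(E_2\sqcup_XH)))=\delta'(C^*(E_1\sqcup_XH))$, which follows from the corresponding property of the unprimed maps because the primed maps have the same images.
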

\begin{proof}
The commutativity of the diagram \eqref{sinkext} is immediate by construction. To prove that it is a pullback diagram,
first we establish
the injectivity of $\psi$. It follows from: the injectivity of $\phi$, the assumption that $\phi$ does not annihilate vertex projections,
the fact that loops without exit in $E_1\sqcup_XH$ remain loops without exit
both in $E_1$ and $H$, and the general Cuntz--Krieger uniqueness theorem \cite[Theorem 1.2]{szy-gen}.

Next, using the injectivity of $\psi$ and appealing to \cite[3.1 Proposition]{ped-99}, we note that to conclude the proof of the theorem,
it suffices to check the following two conditions:
  \begin{gather}
\rho^{-1}\left(\theta'\left(C^*(E_2\underset{X}{\sqcup}H)\right)\right)=\delta'\left(C^*(E_1\underset{X}{\sqcup}H)\right),\\
\ker\theta'\subseteq \psi(\ker\delta').   
  \end{gather}

  The first condition is immediate from our assumption that (\ref{eq:2}) is a pullback.
Indeed, the analogous equation holds for the unprimed maps $\theta$ and 
$\delta$, and the images of these maps coincide with those of $\theta'$ and $\delta'$ respectively because the primed maps are obtained from 
the unprimed maps by extending them by the zero map on~$C^*(H)$.

 To show the second condition, we apply Lemma~\ref{le.ker-desc} and \eqref{4.10} to obtain:
  \begin{align}
    \ker\delta'&= \langle j_{E_1}(\ker\delta)\rangle+\langle j^1_H(C^*(H))\rangle,\label{17}\\
    \ker\theta'&= j_{E_2}(\ker\theta)+\langle j^2_H(C^*(H))\rangle.\label{18}
  \end{align}
Here both $j^1_H$ and $j^2_H$ are defined as in~\eqref{jh}. Now,
  since (\ref{eq:2}) is a pullback diagram, we have
 \mbox{$    \ker\theta\subseteq \phi(\ker\delta)$}. Furthermore, it follows from the construction of $\psi$ and $\delta'$ that
\begin{equation}
j_{E_2}(\phi(\ker\delta))=\psi(j_{E_1}(\ker\delta))\subseteq \psi(\ker\delta').
\end{equation}
 Hence $j_{E_2}(\ker\theta)\subseteq \psi(\ker\delta')$, so, by \eqref{18},
   we are left having to argue that
  \begin{equation}\label{whatever3}
    \langle j^2_H(C^*(H))\rangle \subseteq \psi(\ker\delta').
  \end{equation}

To this end, note first that
\begin{equation}\label{whatever}
   j^2_H(C^*(H))\subseteq \psi(\ker\delta') 
\end{equation}
 because $j^1_H(C^*(H))\subseteq \ker\delta' $ and $\psi\circ j^1_H=j^2_H$.
Furthermore, observe that
 \begin{equation}\label{whatever2}
    \langle j^2_H(C^*(H))\rangle =j^2_H(C^*(H))+\langle j^2_H(C^*(H))\rangle\cap\langle j_{E_2}(C^*(E_2))\rangle
  \end{equation}
and
\begin{equation}\label{22}
 \langle j^2_H(C^*(H))\rangle\cap\langle j_{E_2}(C^*(E_2))\rangle\subseteq\langle\{j_{E_2}(P_v)\;|\;v\in\iota_{E_2}(X)\}\rangle.
  \end{equation}

Next, since all partial isometries in $C^*(E_2)$ associated with paths ending in
$\iota_{E_2}(X)$ are in the image of~$\phi$ by assumption,
we conclude that
\begin{equation}\label{23}
\langle\{j_{E_2}(P_v)\;|\;v\in\iota_{E_2}(X)\}\rangle= \psi(\langle\{j_{E_1}(P_v)\;|\;v\in\iota_{E_1}(X)\}\rangle).
  \end{equation}
Indeed, it boils down to showing that
\begin{equation}\label{subeq}
\langle\psi(\{j_{E_1}(P_v)\;|\;v\in\iota_{E_1}(X)\})\rangle\subseteq\psi(\langle\{j_{E_1}(P_v)\;|\;v\in\iota_{E_1}(X)\}\rangle).
  \end{equation}
Since any graph C*-algebra is the closed linear span of elements of the form $S_\alpha S^*_\beta$ with $r(\alpha)=r(\beta)$
(see \cite[Corollary~1.5.12]{aasm-17}), we are looking at $\alpha,\beta\in\mathrm{Path}(E_2\sqcup_XH)$ such that $P_v S_\alpha S^*_\beta$
or $S_\alpha S^*_\beta P_v$ can be non-zero. This means that $s(\alpha)=v$ or $s(\beta)=v$. However, as $v$ is a sink in $E_2$, we infer
that $\alpha\in \mathrm{Path}(H)$ or $\beta\in \mathrm{Path}(H)$. Hence $\alpha\in \mathrm{Path}(H)$ and $r(\beta)\in H^0$,
or $\beta\in \mathrm{Path}(H)$ and $r(\alpha)\in H^0$. Furthermore, any $E_2$-subpath $\gamma$ of any path ending in $H^0$
 has to end in~$\iota_{E_2}(X)$. Consequently, $S_\gamma\in\phi(C^*(E_1))$, so $j_{E_2}(S_\gamma)\in\psi(C^*(E_1\sqcup_XH))$.
As $j^2_H(S_\delta)\in\psi(C^*(E_1\sqcup_XH))$ for any $\delta\in\mathrm{Path}(H)$, we conclude that all elements $S_\alpha S^*_\beta$
that can multiply nontrivially with $P_v$ are in the image of~$\psi$, which proves~\eqref{subeq}.

Now, taking advantage of the assumption that $\delta(P_v)=0$ for any $v\in\iota_{E_1}(X)$, we infer that
\begin{equation}
\langle\{j_{E_1}(P_v)\;|\;v\in\iota_{E_1}(X)\}\rangle\subseteq\langle j_{E_1}(\ker\delta)\rangle.
  \end{equation}
Hence, by \eqref{17},
\begin{equation}\label{25}
\psi(\langle\{j_{E_1}(P_v)\;|\;v\in\iota_{E_1}(X)\}\rangle)\subseteq\psi(\langle j_{E_1}(\ker\delta)\rangle)\subseteq\psi(\ker\delta').
  \end{equation}
Consequently, combining  \eqref{22},  \eqref{23} and \eqref{25}, we arrive at
\begin{equation}
 \langle j^2_H(C^*(H))\rangle\cap\langle j_{E_2}(C^*(E_2))\rangle\subseteq\psi(\ker\delta'),
  \end{equation}
which, together with \eqref{whatever} and \eqref{whatever2} proves~\eqref{whatever3}.
\end{proof}

\section{Examples and applications}
\noindent
This section is devoted to the study of special cases of Theorem~\ref{main} and Theorem~\ref{le.blob} 
leading to interesting examples in noncommutative topology.

\subsection{The standard Podle\'s quantum sphere}

Observe that the assumptions of Theorem~\ref{main} are true for the standard Podle\'s quantum sphere.
Here $C^*(E_1)=C(S^2_{q0})$, $C^*(F_1)=\mathbb{C}$, $C^*(E_2)=\mathcal{T}$ and $C^*(F_2)=C(S^1)$
(see the diagram~\eqref{pictpull}).

Our next example generalizes a simple gluing construction in topology. Recall that the real projective plane $\mathbb{R}{\rm P}^2$
may be represented as a~closed hemisphere with the antipodal points on the equator identified. If we further identify all those 
antipodal points, we obtain the sphere $S^2$. Here we present a $q$-deformed analog of this procedure.

The C*-algebra $C(\mathbb{R}{\rm P}^2_q)$ of the quantum real projective plane 
$\mathbb{R}{\rm P}^2_q$~\cite[Section~4]{hrsz-03}
admits a graph-algebraic presentation (see~\cite[Section~3.2]{hsz-02}) 
as the \mbox{C*-alge}\-bra of the graph given below:
\begin{equation}
\begin{tikzpicture}[auto,swap]
\tikzstyle{vertex}=[circle,fill=black,minimum size=3pt,inner sep=0pt]
\tikzstyle{edge}=[draw,->]
\tikzstyle{cycle1}=[draw,->,out=130, in=50, loop, distance=40pt]
\tikzstyle{cycle2}=[draw,->,out=135, in=45, loop, distance=50pt]
\node[vertex] (0) at (0,0) {};
\node[vertex] (1) at (0,-1) {};
\path (0) edge[cycle1] node {} (0);
\path (0) edge[edge, bend right] node {} (1);
\path (0) edge[edge, bend left] node {} (1);
\end{tikzpicture}
\end{equation}

Due to Theorem~\ref{main} and $C(S^1)\cong C(\mathbb{R}{\rm P}^1)$, we have the following pullback diagram:
\begin{equation}\label{rp2}
\begin{gathered}
\xymatrix{
& C(S^2_{q0}) \ar[ld] \ar[rd] \\
\mathbb{C}  \ar[rd]
& 
& C(\mathbb{R}{\rm P}^2_q). \ar[ld]\\
& C(\mathbb{R}{\rm P}^1)
}
\end{gathered}
\end{equation}
Observe that the diagram~\eqref{rp2} reflects the aforementioned procedure of shrinking the copy of $\mathbb{R}{\rm P}^1$ inside 
$\mathbb{R}{\rm P}^2$ to a point.

\subsection{The quantum teardrop \boldmath$\mathbb{W}{\rm P}^1_q(1,2)$}
The classical teardrop $\mathbb{W}{\rm P}^1(1,2)$ 
may be represented as the wedge of two spheres, namely we have the following
pushout diagram:
\begin{equation}\label{wp1}
\begin{gathered}
\xymatrix{
& \mathbb{W}{\rm P}^1(1,2) \\
\{\ast\} \ar[ru]
& 
& S^2\ar[lu]~.\\
& S^1 \ar[lu] \ar[ru]
}
\end{gathered}
\end{equation}

To obtain a noncommutative counterpart of the diagram~\eqref{wp1}, we need to introduce a different kind of a noncommutative
sphere. The C*-algebra $C(S^2_{q\infty})$ of the equatorial Podle\'s quantum sphere $S^2_{q\infty}$~\cite[(3b)]{pod-87} 
admits a graph-algebraic presentation (see~\cite[Section~3.1]{hsz-02}) 
as the \mbox{C*-alge}\-bra of the graph given below.
\begin{equation}
\begin{tikzpicture}[auto,swap]
\tikzstyle{vertex}=[circle,fill=black,minimum size=3pt,inner sep=0pt]
\tikzstyle{edge}=[draw,->]
\tikzstyle{cycle1}=[draw,->,out=130, in=50, loop, distance=40pt]
\tikzstyle{cycle2}=[draw,->,out=135, in=45, loop, distance=65pt]
\node[vertex] (0) at (0,0) {};
\node[vertex] (1) at (1,-1) {};
\node[vertex] (2) at (-1,-1) {};
\path (0) edge[cycle1] node {} (0);
\path (0) edge[edge] node {} (1);
\path (0) edge[edge] node {} (2);
\end{tikzpicture}
\end{equation}
Theorem~\ref{main} applies and we obtain the  pullback diagram
\begin{equation}\label{wp1q}
\begin{gathered}
\xymatrix{
& C(\mathbb{W}{\rm P}^1_q(1,2)) \ar[ld] \ar[rd] \\
\mathbb{C}  \ar[rd]
& 
& C(S^2_{q\infty}), \ar[ld]\\
& C(S^1)
}
\end{gathered}
\end{equation}
which can be regarded as a noncommutative deformation of the diagram~\eqref{wp1}.

\subsection{The quantum complex  projective spaces \boldmath$\mathbb{C}{\rm P}^n_q$}
The CW-complex decomposition of complex projectives spaces may be described
in terms of pushout diagrams
\begin{equation}\label{cpn}
\begin{gathered}
\xymatrix{
& \mathbb{C}{\rm P}^n \\
\mathbb{C}{\rm P}^{n-1} \ar[ru]
& 
& B^{2n}\ar[lu]~.\\
& S^{2n-1} \ar[lu] \ar[ru]
}
\end{gathered}
\end{equation}
Let us recall the graph-algebraic presentation of $q$-deformations of the spaces in the diagram~\eqref{cpn}.
\begin{itemize}
\item 
The C*-algebra $C(\mathbb{C}{\rm P}^n_q)$ of the quantum complex  projective space $\mathbb{C}{\rm P}^n_q$~\cite{vs90}
is the graph C*-algebra of a graph that, for $n=3$, is given below~(see~\cite[Section~4.3]{hsz-02}):
\begin{equation}\label{qcpn}
\begin{tikzpicture}[auto,swap]
\tikzstyle{vertex}=[circle,fill=black,minimum size=3pt,inner sep=0pt]
\tikzstyle{edge}=[draw,->]
\tikzstyle{cycle1}=[draw,->,out=130, in=50, loop, distance=40pt]
\tikzstyle{cycle2}=[draw,->,out=135, in=45, loop, distance=65pt]
\node[vertex] (0) at (0,0) {};
\node[vertex] (1) at (1.5,0) {};
\node[vertex] (2) at (3,0) {};
\node[vertex] (3) at (4.5,0) {};
\path (0) edge[edge, above] node {$(\infty)$} (1);
\path (1) edge[edge, above] node {$(\infty)$} (2);
\path (0) edge[edge, bend right=50,above] node[xshift=-5,yshift=-1] {$(\infty)$} (2);
\path (1) edge[edge, bend right=50,above] node[xshift=5,yshift=-1] {$(\infty)$} (3);
\path (2) edge[edge,above] node {$(\infty)$} (3);
\path (0) edge[edge,bend right=60,above] node[yshift=-2] {$(\infty)$} (3);
\end{tikzpicture}
\end{equation}
\item 
The C*-algebra $C(B^{2n}_q)$ of the  Hong--Szyma\'nski quantum even-dimensional ball $B^{2n}_q$~\cite{hsz-08} is 
the graph C*-algebra of a graph that, for $n=3$, is given below~(see~\cite[Section~3.1]{hsz-08}):
\begin{equation}\label{qcpn}
\begin{tikzpicture}[auto,swap]
\tikzstyle{vertex}=[circle,fill=black,minimum size=3pt,inner sep=0pt]
\tikzstyle{edge}=[draw,->]
\tikzstyle{cycle1}=[draw,->,out=130, in=50, loop, distance=40pt]
\tikzstyle{cycle2}=[draw,->,out=135, in=45, loop, distance=65pt]
\node[vertex] (0) at (0,0) {};
\node[vertex] (1) at (1.5,0) {};
\node[vertex] (2) at (3,0) {};
\node[vertex] (3) at (4.5,0) {};
\path (0) edge[cycle1] node {} (0);
\path (1) edge[cycle1] node {} (1);
\path (2) edge[cycle1] node {} (2);
\path (0) edge[edge, above] node {} (1);
\path (1) edge[edge, above] node {} (2);
\path (0) edge[edge, bend right=50,above] node[xshift=-5,yshift=-1] {} (2);
\path (1) edge[edge, bend right=50,above] node[xshift=5,yshift=-1] {} (3);
\path (2) edge[edge,above] node {} (3);
\path (0) edge[edge,bend right=60,above] node[yshift=-2] {} (3);
\end{tikzpicture}
\end{equation}
\item 
The C*-algebra $C(S^{2n-1}_q)$ of the Vaskman--Soibelman  quantum odd-dimensional sphere 
$S^{2n-1}_q$~\cite[Definition on p.~106]{vs90} is the graph C*-algebra of a graph that, for $n=4$, 
 is given below (see~\cite[Section~4.1]{hsz-02}):
\begin{equation}
\begin{tikzpicture}[auto,swap]
\tikzstyle{vertex}=[circle,fill=black,minimum size=3pt,inner sep=0pt]
\tikzstyle{edge}=[draw,->]
\tikzstyle{cycle1}=[draw,->,out=130, in=50, loop, distance=40pt]
\tikzstyle{cycle2}=[draw,->,out=135, in=45, loop, distance=65pt]
\node[vertex] (0) at (0,0) {};
\node[vertex] (1) at (1.5,0) {};
\node[vertex] (2) at (3,0) {};
\node[vertex] (3) at (4.5,0) {};
\path (0) edge[cycle1] node {} (0);
\path (1) edge[cycle1] node {} (1);
\path (2) edge[cycle1] node {} (2);
\path (3) edge[cycle1] node {} (3);
\path (0) edge[edge, above] node {} (1);
\path (1) edge[edge, above] node {} (2);
\path (0) edge[edge, bend right=50,above] node[xshift=-5,yshift=-1] {} (2);
\path (1) edge[edge, bend right=50,above] node[xshift=5,yshift=-1] {} (3);
\path (2) edge[edge,above] node {} (3);
\path (0) edge[edge,bend right=60,above] node[yshift=-2] {} (3);
\end{tikzpicture}
\end{equation}
\end{itemize}
Applying Theorem~\ref{main}, we obtain the pullback diagram
\begin{equation}\label{cpnpull}
\begin{gathered}
\xymatrix{
&
C(\mathbb{C}{\rm P}^n_q)
\ar[rd] \ar[ld]&\\
C(\mathbb{C}{\rm P}^{n-1}_q) \ar[rd]
& & 
C(B^{2n}_q)~.
\ar[ld]
\\
&
C(S^{2n-1}_q)
&
}
\end{gathered}
\end{equation}
Note that the diagram~\eqref{cpnpull} was obtained in~\cite[Proposition~4.1]{adht-18} using equivariant pullback structures.

\subsection{The quantum teardrops \boldmath$\mathbb{W}{\rm P}^1_q(1,n)$}
Let $n\in\mathbb{N}\setminus\{0\}$. Consider the following graph $W_n$:
\begin{equation}\label{wn}
\begin{tikzpicture}[auto,swap]
\tikzstyle{vertex}=[circle,fill=black,minimum size=3pt,inner sep=0pt]
\tikzstyle{edge}=[draw,->]
\tikzstyle{cycle1}=[draw,->,out=130, in=50, loop, distance=40pt]
\tikzstyle{cycle2}=[draw,->,out=135, in=45, loop, distance=65pt]
\node[vertex,label=above:$r_0$] (0) at (0,0) {};
\node (1) at (-1,-0.5) {{\tiny $(\infty)$}};
\node[vertex,label=below:$r_{1}$] (2) at (-2,-1) {};
\node (3) at (-0.3,-0.5) {{\tiny $(\infty)$}};
\node[vertex,label=below:$r_{2}$] (4) at (-0.6,-1) {};
\node (10) at (0,-1) {$\ldots$};
\node (5) at (0.3,-0.5) {{\tiny $(\infty)$}};
\node[vertex,label=below:$r_{n\text{-}1}$] (6) at (0.6,-1) {};
\node (7) at (1,-0.5) {{\tiny $(\infty)$}};
\node[vertex,label=below:$r_{n}$] (8) at (2,-1) {};
\path (0) edge[draw] node {} (1);
\path (1) edge[edge] node {} (2);
\path (0) edge[draw] node {} (3);
\path (3) edge[edge] node {} (4);
\path (0) edge[draw] node {} (5);
\path (5) edge[edge] node {} (6);
\path (0) edge[draw] node {} (7);
\path (7) edge[edge] node {} (8);
\end{tikzpicture}
\end{equation}
Observe that $C^*(W_1)\cong C(S^2_{q0})$.
Moreover, one can  show (see~\cite[Section~3]{bsz-18}) that, in general, the graph \mbox{C*-al}ge\-bra 
$C^*(W_n)$ is isomorphic with the C*-algebra $C(\mathbb{W}{\rm P}^1_q(1,n))$~\cite[Section~3]{bf-12}.
We will also need the following $n$-sink extension $R^n_m$ of the graph~\eqref{cuntz}:
\begin{equation}\label{rnm}
\begin{tikzpicture}[auto,swap]
\tikzstyle{vertex}=[circle,fill=black,minimum size=3pt,inner sep=0pt]
\tikzstyle{edge}=[draw,->]
\tikzstyle{cycle1}=[draw,->,out=130, in=50, loop, distance=40pt]
\tikzstyle{cycle2}=[draw,->,out=130, in=50, loop, distance=70pt]
   
\node[vertex,label=above:$r_0$] (0) at (0,0) {};
\node (2) at (0,1.25) {\vdots};

\node[vertex,label=below:$r_1$] (2) at (-2,-1) {};
\node[vertex,label=below:$r_2$] (4) at (-0.6,-1) {};
\node (10) at (0,-1) {$\ldots$};
\node[vertex,label=below:$r_{n-1}$] (6) at (0.6,-1) {};
\node[vertex,label=below:$r_n$] (8) at (2,-1) {};
\path (0) edge[edge,bend right] node {$(i_1)$} (2);
\path (0) edge[edge,below left] node[xshift=-5.0,yshift=4.0] {$(i_2)$} (4);
\path (0) edge[edge, below right] node[xshift=5.0,yshift=3.0] {$(i_{n\text{-}1})$} (6);
\path (0) edge[edge,bend left,above right] node {$(i_n)$} (8);

\path (0) edge[cycle1] node {} (0);
\path (0) edge[cycle2] node[above] {} (0);

\end{tikzpicture}
\end{equation}
Here the notation $(i_j)$ means that there are $i_j\in\mathbb{N}\setminus\{0\}$ many edges from $r_0$ to $r_j$.
Now,
due to Theorem~\ref{main}, we obtain the pullback diagram
\begin{equation}\label{wpcuntz}
\begin{gathered}
\xymatrix{
&
C(\mathbb{W}{\rm P}^1_q(1,n))
\ar[rd] \ar[ld]&\\
\mathbb{C} \ar[rd]
& & 
C^*(R^n_m)~.
\ar[ld]
\\
&
\mathcal{O}_m
&
}
\end{gathered}
\end{equation}

Let us now consider the graph $G^n$ defined as a pushout of $W_n$~(see \eqref{wn}) and an another graph $H$ over the
sinks of $W_n$. The only restriction on the graph $H$ is that there exists an inclusion 
$\{r_{1},\ldots,r_{n}\}\subseteq H^0$. The graph $G^n$ is represented pictorially as follows:
\begin{equation}
\begin{tikzpicture}[auto,swap]
\tikzstyle{vertex}=[circle,fill=black,minimum size=3pt,inner sep=0pt]
\tikzstyle{edge}=[draw,->]
\tikzstyle{cycle1}=[draw,->,out=130, in=50, loop, distance=40pt]
\tikzstyle{cycle2}=[draw,->,out=135, in=45, loop, distance=65pt]
\node[vertex] (0) at (0,0) {};
\node (1) at (-1,-0.5) {{\tiny $(\infty)$}};
\node[vertex] (2) at (-2,-1) {};
\node (3) at (-0.3,-0.5) {{\tiny $(\infty)$}};
\node[vertex] (4) at (-0.6,-1) {};
\node (10) at (0,-1) {$\ldots$};
\node (5) at (0.3,-0.5) {{\tiny $(\infty)$}};
\node[vertex] (6) at (0.6,-1) {};
\node (7) at (1,-0.5) {{\tiny $(\infty)$}};
\node[vertex,label=below right:$H$] (8) at (2,-1) {};
\path (0) edge[draw] node {} (1);
\path (1) edge[edge] node {} (2);
\path (0) edge[draw] node {} (3);
\path (3) edge[edge] node {} (4);
\path (0) edge[draw] node {} (5);
\path (5) edge[edge] node {} (6);
\path (0) edge[draw] node {} (7);
\path (7) edge[edge] node {} (8);
\draw (-1.8,-0.84) -- (-2.6,-0.84) -- (-2.6,-1.6) -- (2.6,-1.6) -- (2.6,-0.84) -- (1.8,-0.84);
\draw (-1.5,-0.84) -- (-0.7,-0.84);
\draw (-0.35,-0.84) -- (0.35,-0.84);
\draw (1.5,-0.84) -- (0.7,-0.84);
\end{tikzpicture}
\end{equation}
Next, we consider an analogous construction for the graph $R^n_m$~(see \eqref{rnm}) using the same graph $H$, 
and we denote the resulting graph by $E^n_m$. The graph $E^n_m$ is represented pictorially as follows:
\begin{equation}
\begin{tikzpicture}[auto,swap]
\tikzstyle{vertex}=[circle,fill=black,minimum size=3pt,inner sep=0pt]
\tikzstyle{edge}=[draw,->]
\tikzstyle{cycle1}=[draw,->,out=130, in=50, loop, distance=40pt]
\tikzstyle{cycle2}=[draw,->,out=135, in=45, loop, distance=75pt]
\node[vertex] (0) at (0,0) {};
\node (1) at (-1,-0.5) {{\tiny$(i_1)$}};
\node[vertex] (2) at (-2,-1) {};
\node (3) at (-0.3,-0.5) {{\tiny$(i_2)$}};
\node[vertex] (4) at (-0.6,-1) {};
\node (10) at (0,-1) {$\ldots$};
\node (5) at (0.3,-0.5) {{\tiny$\;(i_{n-1})$}};
\node[vertex] (6) at (0.6,-1) {};
\node (7) at (1,-0.5) {{\tiny$(i_n)$}};
\node[vertex,label=below right:$H$] (8) at (2,-1) {};
\path (0) edge[draw] node {} (1);
\path (1) edge[edge] node {} (2);
\path (0) edge[draw] node {} (3);
\path (3) edge[edge] node {} (4);
\path (0) edge[draw] node {} (5);
\path (5) edge[edge] node {} (6);
\path (0) edge[draw] node {} (7);
\path (7) edge[edge] node {} (8);
\draw (-1.8,-0.84) -- (-2.6,-0.84) -- (-2.6,-1.6) -- (2.6,-1.6) -- (2.6,-0.84) -- (1.8,-0.84);
\draw (-1.5,-0.84) -- (-0.7,-0.84);
\draw (-0.35,-0.84) -- (0.35,-0.84);
\draw (1.5,-0.84) -- (0.7,-0.84);

\draw (-1.8,-0.84) -- (-2.6,-0.84) -- (-2.6,-1.6) -- (2.6,-1.6) -- (2.6,-0.84) -- (1.8,-0.84);
\draw (-1.5,-0.84) -- (-0.7,-0.84);
\draw (-0.35,-0.84) -- (0.35,-0.84);
\draw (1.5,-0.84) -- (0.7,-0.84);
\node (11) at (0,1.25) {\vdots};
\path (0) edge[cycle1] node {} (0);
\path (0) edge[cycle2] node[above] {} (0);
\end{tikzpicture}
\end{equation}
Theorem~\ref{le.blob} applies and, for any $n,m\in\mathbb{N}\setminus\{0\}$, we obtain the following pullback diagram:
\begin{equation}\label{wpcuntz2}
\begin{gathered}
\xymatrix{
&
C^*(G^n)
\ar[rd] \ar[ld]&\\
\mathbb{C} \ar[rd]
& & 
C^*(E^n_m)~.
\ar[ld]
\\
&
\mathcal{O}_m
&
}
\end{gathered}
\end{equation}

\section*{Acknowledgement}\noindent
The work on this project was  partially supported by NCN grant 2015/19/B/ST1/03098
(Piotr M.\ Hajac, Mariusz Tobolski) and by NSF grant DMS-1801011 (Alexandru Chirvasitu).
It is a pleasure to thank Sarah Reznikoff for a helpful discussion. P.M.H.\ is also grateful to SUNY Buffalo for its hospitality and financial support.

\end{document}